\documentclass[reqno]{amsart}
\usepackage{graphicx}
\usepackage{amssymb}
\usepackage{amsfonts}
\usepackage{indentfirst}
%\usepackage{graphicx, graphics, psfig, PSTricks}
%\usepackage{pst-plot}
%\usecounter{enumi}
\usepackage{amssymb,amsmath,amsthm}
\usepackage{latexsym,bm}
\usepackage{graphicx}
\usepackage{times}
\usepackage{amsfonts}
\usepackage{indentfirst}
\usepackage[colorlinks,linkcolor=black,anchorcolor=blue,citecolor=blue]{hyperref}
\usepackage[dvipsnames]{pstricks}
\newtheorem{theorem}{Theorem}[section]
\newtheorem{lemma}[theorem]{Lemma}
\newtheorem{proposition}[theorem]{Proposition}

%\newtheorem{remark}[theorem]{Remark}

%\usepackage{showlabels}
%\usepackage[numbers,sort]{natbib}
%\usepackage[]{showkeys}
%\newdefinition{remark}{Remark }
%\newproof{proof}{Proof}

\numberwithin{equation}{section}

\def\Z{\mathbb{Z}}

\def\Zp{\mathbb{Z}_{p}}
\def\Qp{\mathbb{Q}_{p}}

\def\Q{\mathbb{Q}}

\def\P{\mathbb{P}^1(\mathbb{Q}_{p})}
\def\PP{\mathbb{P}^1(\mathbb{Q}_{2})}

\def\Z2{\mathbb{Z}_{2}}
\def\m2#1{\ ({\rm mod} \ 2^{#1})}

\begin{document}

\title{Rational map $ax+1/x$ on  the projective line over $\mathbb{Q}_{2}$ }

\author{Shilei Fan}

\address{School of Mathematics and Statistics, Hubei Key Laboratory of Mathematical Sciences, Central  China Normal University,  Wuhan, 430079, China \&\& Aix-Marseille Universit\'e, Centrale Marseille, CNRS, Institut de Math\'ematiques de Marseille, UMR7373, 39 Rue F. Joliot Curie 13453, Marseille, France}
\email{slfan@mail.ccnu.edu.cn}

\author{Lingmin Liao}
\address{LAMA, UMR 8050, CNRS,
Universit\'e Paris-Est Cr\'eteil Val de Marne, 61 Avenue du
G\'en\'eral de Gaulle, 94010 Cr\'eteil Cedex, France}
\email{lingmin.liao@u-pec.fr}

\thanks{The research of S. L. FAN on this project  has received funding from the European  Research Council(ERC) under the European Union's Horizon 2020 research  and innovation programme under grant agreement No 647133(ICHAOS). It has also been  supported by NSF of China (Grant No.s 11401236 and 11471132)}
\begin{abstract}
The dynamical structure of the rational map $ax+1/x$ on the projective line $\PP$ over the field $\mathbb{Q}_2$ of $2$-adic numbers, 
%is studied in this note.
%For each Chebyshev polynomial, the dynamical structure
 is fully described. %by showing all its minimal subsystems and their attracting basins.

\end{abstract}
\subjclass[2010]{Primary 37P05; Secondary 11S82, 37B05}
\keywords{$p$-adic dynamical system, rational maps, minimal decomposition, subshift of finite type}
%\thanks{S.F. was partially supported by NSF of China (Grant No. 11231009) and CNRS program (PICS No.5727), L.L. was partially supported by 12R03191A - MUTADIS (France) and the project PHC Orchid of MAE and MESR of France}
 %{\it Minist\`eres des Affaires \'Etrang\`eres} and {\it Minist\`eres de l'Enseignement Sup\'erieur et de la Recherche}
\maketitle

%% main text

\section{Introduction}

For a prime number $p$,  let $\mathbb{Q}_p$ be the field of $p$-adic numbers and $\P$ be the projective line over $\Qp$. Recently, dynamical systems on $\mathbb{Q}_p$ and $\P$ have been attracting much attention. See, for example, \cite{Anashin-Khrennikov-AAD,Baker-Rumely-book,SilvermanGTM241} and their bibliographies therein. 

As first natural examples, polynomials and rational maps  of coefficients in $\mathbb{Q}_p$ have been widely studied. 
The dynamical structure of polynomials of coefficients in the ring $\Zp$ of $p$-adic integers is usually described by showing all the minimal subsystems. One can find such dynamical structure results for square mapping on $\Zp$ in \cite{FLSq}, for quadratic polynomials on $\Z2$ in \cite{FL11} and for Chebyshev polynomials on $\Z2$ in \cite{FLCh}.  With respect to the spherical metric on $\Qp$, rational maps of good reduction with degree at least $2$ have similar dynamical structure \cite{FFLW2016}.
On the other hand, the polynomial ${x^p-x \over p}$ on $\Zp$ is proved to be topologically conjugate to the full shift on the symbolic space with $p$ symbols and thus exhibits chaos (\cite{WS98}). This chaotic property has also been studied for quadratic polynomials in $\Qp$ (\cite{TVW89, DSV06}) and for some general expanding polynomials in $\Qp$ (\cite{FLWZ07}). 

In \cite{FFLW2014}, the dynamical structure of rational maps of degree one on $\P$ is completely depicted. 
 %It turns out that such $p$-adic dynamical systems are quite different to the dynamical systems on Euclidean spaces ().  
However, the dynamical behaviors of higher degree rational maps are far from clear. In \cite{FL16}, we have started an attempt with a family of rational maps of degree $2$. That is 
 \begin{align}
\phi(x)=ax+\frac{1}{x}, \ a\in \Q_p.
\end{align}
Except for one subcase ($|a|_p<1, \sqrt{a}\in \Qp$), the global picture of the dynamical structure of $\phi$ on $\P$ for $p\geq 3$ was given in \cite{FL16}. In this paper, we deal with the case $p=2$ and we successfully show the global dynamical structure of $\phi$ for all cases. %Different to the case $p\geq 3$, there might exist Julia set of positive Haar measure. 
We remark that the case $p=2$ is usually different to the case $p\geq 3$ and has much more applications in computer science.

\smallskip

To describe the dynamical structure of a dynamical system, one may decompose the space into Fatou set and Julia set, on which the dynamical behaviors are quite different. Let $\phi :X\to X$ be a continuous map from a  metric space to  itself.       The {\bf Fatou set} $\mathcal{F}_{\phi}$ of $\phi$ is the set of point $z\in X$ where  the iteration family $\{\phi^n\}_{n\geq 1}$  is equicontinuous  on some neighborhood of  $z$.   The {\bf Julia set} $\mathcal{J}_{\phi}$ is the complement of the Fatou set in $X$.    

We denote by $|\cdot|_p$  the $p$-adic absolute value on $\Q_p$. For $x_0\in \Q_p$ and $r>0$, let 
$$D(x_0,r):=\{x\in \Q_p: |x-x_0|_p \leq  r\}$$   
be the closed disk centered at $x_0$ with radius $r$,
and let  
$$S(x_0,r):=\{x\in \Q_p: |x-x_0|_p =  r\}$$ 
be the sphere centered at $x_0$ with radius $r$.
The projective line $\mathbb{P}^{1}(\Qp)$ which can be viewed as $\Qp\cup\{\infty\}$ is equipped the spherical metric (see Section \ref{Sec-preliminaries}).  
     
With respect to the spherical metric, a rational map $\phi\in \Q_p(z)$ induces a continuous dynamical system from $\P$ to itself, denoted by $(\P,\phi)$. If the rational map $\phi$ has good reduction (see the definition in \cite{SilvermanGTM241}, page 58, see also \cite{FFLW2016}), then it is $1$-Lipschitz continuous on $\P$ and thus has empty Julia set.
%A $1$-Lipschitz $p$-adic dynamical system can usually be fully described by showing all its minimal subsystems.
%Polynomials with coefficients in the ring $\Zp$ of $p$-adic integers and rational maps with good reductions are two important families of $1$-Lipschitz dynamical systems. 
%In \cite{FL11}, the authors proved  a structure theorem for polynomials of degree
% at least $2$ in $\mathbb{Z}_p[x]$. The same 
 A structure theorem for good reduction maps with degree at least $2$ has already been obtained in \cite{FFLW2016}.

Observe that  $$\phi(x)=ax+\frac{1}{x}=\frac{ax^2+1}{x}.$$ 
In all cases, $\infty$ is a fixed point. If $p=2$ and $|a|_2=1$, the map $\phi$ has good reduction which implies $\mathcal{J}_{\phi}=\emptyset$.
Further, by Theorem 1.2 of \cite{FFLW2016}, we have the following minimal decomposition.
\begin{theorem}\label{minid}
Let $\phi(x)=ax+\frac{1}{x}, a\in \Q_2$.
 If $|a|_2=1$, the dynamical system $(\PP,\phi)$ can be decomposed into 
$$\PP= \mathcal{P} \bigsqcup \mathcal{M} \bigsqcup \mathcal{B}
$$
where $ \mathcal{P}$ is the  finite set  consisting of all periodic points of
$\phi$, $\mathcal{M}= \bigsqcup_i \mathcal{M}_i$ is the union of all (at most countably
many) clopen invariant sets such that each $\mathcal{M}_i$ is a finite union
of balls and each subsystem $\phi: \mathcal{M}_i \to \mathcal{M}_i$ is minimal, and each
point in $\mathcal{B}$ lies in the attracting basin of a periodic orbit or of
a minimal subsystem.
\end{theorem}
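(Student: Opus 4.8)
The plan is to treat this theorem as a direct application of the structure theorem for good reduction rational maps of degree at least $2$ (Theorem 1.2 of \cite{FFLW2016}). The only thing that genuinely needs checking is that $\phi$ has good reduction at $p=2$ under the hypothesis $|a|_2=1$; once this is in place, the decomposition $\PP = \mathcal{P} \sqcup \mathcal{M} \sqcup \mathcal{B}$ is handed to us by the cited result. So my two steps are: (i) verify good reduction, and (ii) invoke \cite{FFLW2016}.

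For step (i), I would first write $\phi$ in homogeneous coordinates. Setting $x = X/Y$, one computes
\[
\phi([X:Y]) = [\,aX^2 + Y^2 : XY\,],
\]
so that $\phi$ is given by the pair of binary quadratic forms $F = aX^2 + Y^2$ and $G = XY$, both with coefficients in $\Z2$ and with at least one unit coefficient, and coprime over $\Q_2$. Good reduction is equivalent to the resultant $\mathrm{Res}(F,G)$ being a $2$-adic unit, equivalently to $\bar F$ and $\bar G$ having no common zero in $\mathbb{P}^1(\overline{\mathbb{F}_2})$. A Sylvester determinant computation gives $\mathrm{Res}(F,G) = \pm a$, which is a unit precisely because $|a|_2=1$. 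Alternatively, since $a \in \Z2^{\times}$ forces $\bar a = 1$, one has $\bar F = X^2 + Y^2 = (X+Y)^2$ and $\bar G = XY$ over $\mathbb{F}_2$; the unique zero $[1:1]$ of $\bar F$ is distinct from the zeros $[1:0]$, $[0:1]$ of $\bar G$, so there is no common zero. I would flag the characteristic-$2$ collapse $X^2+Y^2=(X+Y)^2$ explicitly, since it is the one place where the $p=2$ case looks different from $p\geq 3$, even though the conclusion (good reduction, and $\deg\bar\phi = 2$) is unchanged.

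For step (ii), good reduction of a map of degree $\geq 2$ makes $\phi$ $1$-Lipschitz for the spherical metric, so the iterates $\{\phi^n\}$ are equicontinuous everywhere and $\mathcal{J}_\phi = \emptyset$; the whole of $\PP$ is the Fatou set. Theorem 1.2 of \cite{FFLW2016} then applies verbatim and yields the stated partition into the finite set $\mathcal{P}$ of periodic points, the at most countable union $\mathcal{M} = \bigsqcup_i \mathcal{M}_i$ of clopen minimal components each a finite union of balls, and the attracting-basin part $\mathcal{B}$.

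I do not expect a real obstacle here, and it is worth being candid about that: the substance of the theorem lives entirely in the general structure theorem of \cite{FFLW2016}, and the present statement is an immediate corollary once good reduction is confirmed. The only point requiring attention is the good-reduction check at the prime $2$, where the reduction of the numerator degenerates to a perfect square; one must confirm that this degenerate zero does not collide with a zero of the denominator. Were one to aim for a self-contained argument, the hard part would be reproving the structure theorem itself — locating the finitely many periodic points and showing the complement splits into minimal clopen pieces and basins — but under the stated ground rules that theorem may be assumed.
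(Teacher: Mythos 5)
Your proposal is correct and takes essentially the same route as the paper: the paper likewise disposes of this theorem in one step, asserting that $|a|_2=1$ gives $\phi$ good reduction (hence $\mathcal{J}_{\phi}=\emptyset$) and then quoting Theorem 1.2 of \cite{FFLW2016} for the decomposition. Your added verification of good reduction --- the resultant $\mathrm{Res}(aX^2+Y^2,\,XY)=\pm a$ being a unit, and the characteristic-$2$ degeneration $\overline{F}=(X+Y)^2$ with its zero $[1:1]$ avoiding the zeros of $\overline{G}=XY$ --- is accurate and merely makes explicit what the paper leaves as an assertion.
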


To complete the study of the dynamical behaviors of 
 \begin{align}
\phi(x)=ax+\frac{1}{x}, \ a\in \Q_2,
\end{align}
on $\mathbb{P}^1(\Q_2)$, we are left two cases: $|a|_2>1$ and $|a|_2<1$. Solving these two cases is the main content of the present paper. In the rest two cases, we often find subsystems topologically conjugate to the full shift $(\Sigma_2, \sigma)$ or some subshift of finite type $(\Sigma_A, \sigma)$ (see the notation and definitions in Section \ref{Sec-preliminaries}).

When  $|a|_2>1$, we have the following theorem. 
\begin{theorem}\label{thm-2}
%Let $\phi(x)=ax+\frac{1}{x}, a\in \Q_2$.  
If $|a|_2> 1$, the  dynamical system $(\PP,\phi)$ is described  as following.
\begin{itemize}
\item[(i)] If $|a|_2>1$ and   $\sqrt{1-a}\notin\Q_2$, then $\mathcal{F}_{\phi}=\PP$.  Moreover, \[\forall x\in \Q_2, \quad \lim_{n\to\infty} \phi^n(x)=\infty.\]
\item[(ii)] If $|a|_2>1$ and  $\sqrt{1-a}\in\Q_2$, then  
$$\mathcal{J}_\phi=\left\{x\in \Q_2:|\phi^{n}(x)|_2=\frac{1}{\sqrt{|a|_2}} \hbox{  for all } n\geq 0 \right\}$$
and $(\mathcal{J}_\phi,\phi)$ is topologically conjugate to the full shift  $ (\Sigma_{2}, \sigma)$. 
Moreover,  $$\forall x\in \mathcal{F}_\phi, \quad  \lim_{n\to \infty}\phi^n(x)=\infty.$$
\end{itemize} 
\end{theorem}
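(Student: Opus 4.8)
The plan is to reduce everything to the dynamics of the $2$-adic absolute value and then to analyze a single ``critical sphere''. Write $|a|_2 = 2^{s}$ with $s\ge 1$, set $\rho = 2^{-s/2}=|a|_2^{-1/2}$, and let $\mathcal{S}=S(0,\rho)$. The ultrametric inequality controls $|\phi(x)|_2=|ax+1/x|_2$ off $\mathcal{S}$: if $|x|_2>\rho$ then $|ax|_2>|1/x|_2$, so $|\phi(x)|_2=|a|_2|x|_2>|x|_2$; while if $0<|x|_2<\rho$ then $|\phi(x)|_2=1/|x|_2>1/\rho=|a|_2^{1/2}>\rho$, landing in the previous regime. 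Hence any $x$ with $|\phi^n(x)|_2\ne\rho$ for some $n$ satisfies $\phi^m(x)\to\infty$, and since $\infty$ is a fixed point toward which these orbits converge locally uniformly, such points lie in $\mathcal{F}_\phi$ with $\lim_n\phi^n=\infty$. Thus the theorem reduces to understanding the invariant set $\Lambda:=\{x:|\phi^n(x)|_2=\rho\ \forall n\ge 0\}=\bigcap_{n\ge 0}\phi^{-n}(\mathcal{S})$, and in particular $\mathcal{J}_\phi\subseteq\Lambda$.

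For part (i) I would show $\Lambda=\emptyset$. If $s$ is odd then $\rho\notin|\Q_2|_2$, so $\mathcal{S}=\emptyset$ and there is nothing more to prove. If $s$ is even, I compute the $\phi$-preimages of a point $w\in\mathcal{S}$ by solving $ax^2-wx+1=0$, whose discriminant is $w^2-4a$. Since $|w|_2=\rho$ is small against $|a|_2$, the elements $w^2-4a$ and $-4a$ lie in the same square class of $\Q_2^\times$, so $\sqrt{w^2-4a}\in\Q_2\iff\sqrt{1-a}\in\Q_2$. Therefore, when $\sqrt{1-a}\notin\Q_2$, no $w\in\mathcal{S}$ has a $\Q_2$-preimage, i.e.\ $\phi^{-1}(\mathcal{S})\cap\mathcal{S}=\emptyset$; no point can stay on $\mathcal{S}$ even once, whence $\Lambda=\emptyset$, $\mathcal{F}_\phi=\PP$, and $\phi^n(x)\to\infty$ for every $x$.

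For part (ii), where $\sqrt{1-a}\in\Q_2$ and $|a|_2=2^{2t}$, the same quadratic has two roots $x_\pm=(w\pm\sqrt{w^2-4a})/(2a)$ in $\Q_2$, and an absolute-value count gives $|x_\pm|_2=\rho$ for every $w\in\mathcal{S}$; hence $\phi^{-1}(\mathcal{S})\subseteq\mathcal{S}$ and $\phi\colon\phi^{-1}(\mathcal{S})\to\mathcal{S}$ is a degree-$2$ surjection. I would then establish two structural facts. First, on $\phi^{-1}(\mathcal{S})$ one has $|\phi'(x)|_2=|a-x^{-2}|_2=|a|_2/2>1$, so $\phi$ is uniformly expanding there, and the critical points $\pm a^{-1/2}$ (when in $\Q_2$) have image of absolute value $2^{t-1}>\rho$, so they lie outside $\phi^{-1}(\mathcal{S})$ and the double cover is unramified. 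Second --- and here $p=2$ is used crucially --- the sphere $\mathcal{S}=2^{t}\,\mathbb{Z}_2^{\times}$ is a \emph{single} ball, so an unramified double cover of it splits into exactly two disjoint balls $B_0\sqcup B_1=\phi^{-1}(\mathcal{S})$, each carried homeomorphically (indeed by a fixed dilation of ratio $|a|_2/2$) onto $\mathcal{S}$. The itinerary map $x\mapsto(\varepsilon_n)_{n\ge 0}$ with $\phi^n(x)\in B_{\varepsilon_n}$ then yields a homeomorphism $\Lambda\to\Sigma_2$; since $\phi(B_i)=\mathcal{S}\supseteq B_0\sqcup B_1$, every transition occurs, so the coding conjugates $(\Lambda,\phi)$ to the full shift $(\Sigma_2,\sigma)$. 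Finally, the uniform expansion shows $\{\phi^n\}$ is not equicontinuous at any point of $\Lambda$, giving $\Lambda\subseteq\mathcal{J}_\phi$, hence $\mathcal{J}_\phi=\Lambda$ by the first paragraph.

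The main obstacle is the bundle of metric estimates behind part (ii): proving that \emph{both} preimages of every $w\in\mathcal{S}$ land exactly on $\mathcal{S}$ (neither inside nor outside), that the expansion ratio is precisely $|a|_2/2$, and that $\phi^{-1}(\mathcal{S})$ consists of exactly two full balls each surjecting homeomorphically onto $\mathcal{S}$ --- this last point combining unramifiedness, the single-ball structure special to $\Q_2$, and the matching of radii under the dilation. A secondary nuisance is the smallest case $|a|_2=4$, where $w^2$ is not negligible against $4a$ in the discriminant, so the identification of $\sqrt{w^2-4a}$ with $\sqrt{1-a}$ must be carried out to higher $2$-adic precision rather than by the leading-term estimate.
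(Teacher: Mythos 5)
Your proposal is correct, and in part (i) it takes a genuinely different route from the paper. The paper (Proposition~\ref{prop-1}) argues \emph{forward}: it computes $|\phi(x)|_2$ for $x$ on the critical sphere $S(0,|a|_2^{-1/2})$ by writing $x=\pm 2^{-v_2(a)/2}+y$ and classifying the unit part $u=2^{-v_2(a)}a$ modulo $8$, showing that the image always escapes; you argue \emph{backward}, showing the sphere has no $\Q_2$-preimages at all because the discriminant $w^2-4a$ of $ax^2-wx+1=0$ is a non-square whenever $\sqrt{1-a}\notin\Q_2$ --- slicker, and it explains part (ii) uniformly (two roots, both of absolute value $\rho$, when $\sqrt{1-a}\in\Q_2$). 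The borderline case you flag, $|a|_2=4$, is a real issue: there $|w^2/(4a)|_2=1/4$, the leading-term square-class identification fails, and in fact $1-a$ and $-4a$ then lie in \emph{different} square classes; but your proposed higher-precision fix works, since $w=2t$ with $t\in\mathbb{Z}_2^{\times}$ gives $t^2\equiv 1 \pmod 8$, hence (writing $a=u/4$) $w^2-4a=4t^2-u\equiv 4-u \pmod 8$, a square exactly when $u\equiv 3\pmod 8$, i.e.\ exactly when $\sqrt{1-a}\in\Q_2$ --- the same mod-$8$ arithmetic at which the paper itself splits the cases $v_2(a)=-2$ and $v_2(a)<-2$. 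For part (ii) your route is essentially the paper's: your balls $B_0\sqcup B_1=\phi^{-1}(\mathcal{S})$ are sub-balls of the Markov pieces $D(\pm(1-a)^{-1/2},\frac{1}{4\sqrt{|a|_2}})$ of Lemmas~\ref{lem3.2}--\ref{attracting}, whose union is the whole sphere; your dilation ratio $|a|_2/2$ is Lemma~\ref{scaling}; and your hand-rolled itinerary coding replaces the citation of Theorem~\ref{thm-repeller} from \cite{FLWZ07}. Two small suggestions: your derivative estimate follows at once from $a-x^{-2}=(w-2/x)/x$ with $w=\phi(x)\in\mathcal{S}$, since $|w|_2<|2/x|_2$; but for the coding you should establish the two-point scaling $|\phi(x)-\phi(y)|_2=\frac{|a|_2}{2}|x-y|_2$ on each $B_i$ (proved exactly as in Lemma~\ref{scaling}), not merely the derivative bound, since the latter alone does not give the homeomorphic surjection of each $B_i$ onto $\mathcal{S}$.
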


\smallskip
When  $|a|_2<1$, we set  $X_a:=\left\{x\in \mathbb{Q}_2: \sqrt{|a|_2}<|x|_2<\frac{1}{\sqrt{|a|_2}}\right\}$. We have the following theorem.
\begin{theorem}\label{a<1}
  Assume $|a|_2<1$.   Then 
  $$\mathcal{F}_{\phi}=\bigcup_{i=0}^{\infty}f^{-i}(X_a)$$ and 
  we have a minimal decomposition of the subsystem $(X_a,\phi)$  as stated  in Theorem \ref{minid}. 
  
  For the Julia set $\mathcal{J}_{\phi}$, we distinguish the following three cases.
 \begin{itemize}
\item[(i)]  If $\sqrt{-a}\in \Q_2$, then  $$\mathcal{J}_\phi=\left\{x\in \PP: \phi^{n}(x) \notin  X_a
 \hbox{  for all } n\geq 0 \right\} $$
 and the subsystem
$\left(\mathcal{J}_{\phi}\cap \left(\PP\setminus( S(0,\sqrt{|a|_2})\cap X_a) \right), \phi \right)$ is topologically conjugate to a subshift of finite type $(\Sigma_{A},\sigma),$
where the incidence matrix is
\begin{equation*}\label{equ1}
A=
\begin{cases}
\begin{pmatrix} 0 & 0& 1 & 0   \\ 0&0&1&0\\  0&0&0&1 \\  1&1&0&1 \end{pmatrix}, \ \hbox{ if } |a|_2<1/4,\\
\begin{pmatrix} 0 & 0& 1 & 0 &0  \\ 0&0&1&0&0\\  0&0&0&1&0 \\  0&0&0&1&1 \\ 1&1&0&0&0 \end{pmatrix},\ \hbox{ if } |a|_2=1/4.
\end{cases}
\end{equation*}

%$$A=\begin{pmatrix} 0 & 0& 1 & 0   \\ 0&0&1&0\\  0&0&0&1 \\  1&1&0&1 \end{pmatrix}.$$

\item[(ii)]  If $\sqrt{-a}\notin \Q_2$, $\sqrt{-a}\in \Q_2(\sqrt{-3})$ and $|a|_2=1/4$, then 
$$\mathcal{J}_{\phi}=\{0,\infty \} \bigcup_{k\geq 0} (S(0,2^{2k+1})\cup S(0,2^{-2k-1} )).$$ 
Furthermore,
\begin{equation*}\label{equ1}
\begin{cases}
\phi^{k-1}(x) \in  S(0,2^{3}), \ \hbox{ if } x\in  S(0,2^{2k+1}),\\
\phi^{k}(x) \in  S(0,2^{3}), \ \hbox{ if } x\in  S(0,2^{-(2k+1)});
\end{cases}
\end{equation*}
and the subsystem
$(S(0,2^{3})\cup S(0,2^{-3} )) \cup S(0,2), \phi)$ is topologically conjugate to a subshift of finite type $(\Sigma_{A},\sigma),$
where the incidence matrix is
$$A=\begin{pmatrix} 0 & 0& 1 & 0   \\ 0&0&1&0\\  0&0&0&1 \\  1&1&0&0 \end{pmatrix}.$$ 
\item[(iii)]  Otherwise,   $\mathcal{J}_{\phi}=\{0,\infty\}$.
 \end{itemize}
 
\end{theorem}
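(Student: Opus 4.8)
The plan is to control the dynamics through the coarse behavior of the $2$-adic absolute value, and then to refine the picture only on the finitely many boundary spheres where genuine cancellation in $\phi(x)=(ax^2+1)/x$ can occur. Writing $|a|_2=2^{-m}$, I would first record the size dynamics by comparing the two terms $ax$ and $1/x$. On the outer region $|x|_2>1/\sqrt{|a|_2}$ the term $ax$ dominates, so $|\phi(x)|_2=|a|_2|x|_2<|x|_2$ and absolute values strictly shrink; on the inner region $|x|_2<\sqrt{|a|_2}$ the term $1/x$ dominates, so $|\phi(x)|_2=1/|x|_2>1/\sqrt{|a|_2}$ and the orbit is thrown back out; and for $x\in X_a$ one checks $|ax|_2<|1/x|_2$, whence $|\phi(x)|_2=1/|x|_2$ again lies in $X_a$. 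This already shows that $X_a$ is forward invariant and that any orbit either eventually enters $X_a$ or is confined forever to the two boundary spheres $S(0,\sqrt{|a|_2})$ and $S(0,1/\sqrt{|a|_2})$ (which occur only when $m$ is even) together with $\{0,\infty\}$.

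For the Fatou part I would verify that on $X_a$ the map $\phi$ acts essentially as the inversion $x\mapsto 1/x$ perturbed by the small term $ax$, hence is an isometry (in particular $1$-Lipschitz) for the spherical metric; this lets me invoke the $1$-Lipschitz minimal-decomposition machinery exactly as in Theorem \ref{minid} to obtain the decomposition of $(X_a,\phi)$ into periodic points, minimal components, and their basins. Since equicontinuity propagates backward — a point whose orbit lands in the equicontinuous set $X_a$ after finitely many steps is itself a point of equicontinuity — this gives $\bigcup_{i\ge0}\phi^{-i}(X_a)\subseteq\mathcal{F}_\phi$. Conversely, a point whose entire orbit avoids $X_a$ must live on the boundary spheres, where I would exhibit expansion: arbitrarily close points separate, some falling into $X_a$ and some not, so equicontinuity fails and such points lie in $\mathcal{J}_\phi$. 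This establishes $\mathcal{F}_\phi=\bigcup_{i\ge0}\phi^{-i}(X_a)$.

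The heart is the Julia set, i.e. the orbits trapped on the boundary spheres and at $\{0,\infty\}$. The decisive local computation is on $S(0,1/\sqrt{|a|_2})$, where $|ax|_2=|1/x|_2=\sqrt{|a|_2}$, so $\phi(x)=(ax^2+1)/x$ can become small precisely near the roots of $ax^2+1=0$, namely $x=\pm\sqrt{-1/a}$. This is exactly the source of the trichotomy. In case (i), $\sqrt{-a}\in\Q_2$ forces $m$ even and places genuine roots on $S(0,1/\sqrt{|a|_2})$; these cut out sub-balls that jump across $X_a$ into $S(0,\sqrt{|a|_2})$ and back, and partitioning the two spheres along these sub-balls lets me read off the finitely many transition rules and hence the incidence matrix, with one extra state appearing when $|a|_2=1/4$ (there $X_a$ degenerates to the unit sphere and one additional intermediate sphere must be tracked), which accounts for the $4\times4$ versus $5\times5$ matrices. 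In case (iii) the quantity $ax^2+1$ never becomes small, every orbit outside $\{0,\infty\}$ enters $X_a$, and $\mathcal{J}_\phi=\{0,\infty\}$.

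For the borderline case (ii), where $\sqrt{-a}\notin\Q_2$ but $\sqrt{-a}\in\Q_2(\sqrt{-3})$ (the unramified quadratic extension) and $|a|_2=1/4$, the quadratic $ax^2+1$ has no $\Q_2$-root yet reduces to a product over $\F_4$; I would track the exact $2$-adic valuation of $\phi$ on successive spheres to show the orbit narrowly misses $X_a$ and instead descends through the cascade $S(0,2^{2k+1})$ and $S(0,2^{-(2k+1)})$, all funneling in the stated number of steps into the core $S(0,2^{3})\cup S(0,2^{-3})\cup S(0,2)$, on which one obtains the $4\times4$ subshift. In every case the final task is to show the itinerary map is a homeomorphism conjugating $\phi$ to $\sigma$, with injectivity coming from the separating expansion and surjectivity from a nested-ball argument realizing each admissible sequence. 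I expect the main obstacle to be the construction and verification of this Markov partition on the boundary spheres: it requires computing $\phi$ beyond leading order, to sufficient $2$-adic precision, to locate which sub-balls map onto which, and case (ii) is the most delicate, since the generic escape barely fails and the infinite sphere cascade must be controlled uniformly.
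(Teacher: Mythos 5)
Your overall route coincides with the paper's: coarse valuation dynamics, the forward-invariant core $X_a$ carrying the minimal decomposition, a Markov partition for the orbits trapped off $X_a$, and an itinerary conjugacy; your identification of the roots $\pm\sqrt{-1/a}$ on $S(0,1/\sqrt{|a|_2})$ as the source of the trichotomy, the mod $8$ mechanism, and the special role of $|a|_2=1/4$ are all correct in spirit. However, your structural claim that an orbit avoiding $X_a$ is ``confined forever to the two boundary spheres'' is false, and it derails your treatment of case (i). Since $\phi(\pm 1/\sqrt{-a})=0$, the two sub-balls $D(\pm 1/\sqrt{-a},\frac{1}{4\sqrt{|a|_2}})$ of the outer boundary sphere are mapped (Lemma \ref{gotozero}) onto $D(0,\frac{\sqrt{|a|_2}}{8})$, which lies strictly \emph{inside} the inner boundary sphere --- not ``into $S(0,\sqrt{|a|_2})$'' as you assert --- and that ball is in turn mapped onto $\PP\setminus D(0,\frac{4}{\sqrt{|a|_2}})$, a neighborhood of $\infty$. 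The repeller thus has states on the outer boundary sphere, near $0$, and near $\infty$ (with the extra sphere $S(0,\frac{4}{\sqrt{|a|_2}})$ when $|a|_2=1/4$, needed because then $\phi(D_4)=\PP\setminus D(0,4\sqrt{|a|_2})$ no longer contains the boundary sub-balls), and the orbit $0\mapsto\infty\mapsto\infty$ lies inside it. A partition ``of the two spheres'' cannot be Markov for this dynamics and is incompatible with the very matrices you quote: the entry $A_{44}=1$ for $|a|_2<1/4$ records the fixed point $\infty$ sitting in the fourth state with $\phi(D_4)\supset D_4$, whereas a shuttle between two spheres would give a transition graph with no self-loop.

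The missed state at $\infty$ also breaks your ``injectivity from separating expansion'': on $D_4=\PP\setminus D(0,\frac{4}{\sqrt{|a|_2}})$ one has $|\phi(x)-\phi(y)|_2=|a|_2\,|x-y|_2$, a strict contraction in the affine metric, and the expansion is visible only in the spherical metric or after a change of coordinates. This is exactly why the paper conjugates by $g(x)=1/(x-1)$ to transport the states into $\mathbb{Z}_2$ and proves the ``generalized weak repeller'' Theorem \ref{general} (balls of unequal radii, every branch non-contracting after rescaling, at least one expanding) before invoking the conjugacy of \cite{FLWZ07}; your nested-ball argument needs some such device on the branch through $\infty$. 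Two smaller omissions: in case (iii) you assert but never argue that $0,\infty\in\mathcal{J}_\phi$ (one needs the multiplier $1/a$ of $\infty$, so that $\infty$ is repelling), and the claim that in case (iii) every nonzero orbit enters $X_a$ requires the exact evaluation of $|ax^2+1|_2$ on $S(0,1/\sqrt{|a|_2})$ according to $2^{-v_2(a)}a \bmod 8$ together with the short return chains of Lemmas \ref{3.9}--\ref{3.12}; contrary to your expectation, case (ii) is not the delicate one --- its cascade is immediate from Lemma \ref{largesphere}, each sphere $S(0,2^{\pm(2k+1)})$ mapping onto the next --- the real bookkeeping is in showing the near-miss orbits of case (iii) do land in $X_a$. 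Your Fatou-side argument (spherical isometry on $X_a$, backward invariance, the $1$-Lipschitz minimal-decomposition machinery as in \cite{FL16}) does match the paper.
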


\medskip
\section{Preliminaries}\label{Sec-preliminaries}
Consider the field $\mathbb{Q}$ of rational numbers and a prime $p\geq 2$.
Any nonzero rational number $r\in \mathbb{Q}$ can be written as
$r =p^v \frac{a}{b}$ where $v, a, b\in \mathbb{Z}$ and $(p, a)=1$ and $(p, b)=1$
(here $(x, y)$ denotes the greatest common divisor of two integers $x$ and $y$).  We define $v_p(r)=v$ and
$|r|_p = p^{-v_p(r)}$ for $r\not=0$ and $|0|_p=0$.
Then $|\cdot|_p$ is a non-Archimedean absolute value on $\Q$. That means\\
\indent (i)  \ \ $|r|_p\ge 0$ with equality only for $r=0$; \\
\indent (ii) \ $|r s|_p=|r|_p |s|_p$;\\
\indent (iii) $|r+s|_p\le \max\{ |r|_p, |s|_p\}$.\\
The {\it field of $p$-adic numbers} $\mathbb{Q}_p$ is the completion of $\mathbb{Q}$ under the absolute value 
$|\cdot|_p$. 
Actually, any $x\in \Qp$ can be written  uniquely as
$$
   x = \sum_{n=v_p(x)}^\infty a_n p^n    \quad (v_p(x) \in \mathbb{Z}, a_n \in\{0, 1, 2, \cdots, p-1\} \hbox{ and  }a_{v_p(x)}\neq 0).
$$
Here, the integer $v_p(x)$ is called the {\em $p$-valuation} of $x$.

 Any point in the {\it projective line} $\P$ may be given in homogeneous coordinates by a pair
$[x_1 : x_2]$
of points in $\Qp$ which are not both zero. Two such pairs are equal if they differ by an overall (nonzero) factor $\lambda \in \Q_p^*$:
$$[x_1 : x_2] = [\lambda x_1 : \lambda x_2].$$
The field $\Qp$ may be identified with the subset of $\P$ given by
$$\left\{[x : 1] \in \mathbb P^1(\Qp) \mid x \in \Qp\right\}.$$
This subset contains  all points  in $\P$ except one: the point  of infinity, which may be given as
$\infty = [1 : 0].$

The {\it spherical metric} defined on $\P$ is analogous to the standard spherical metric on the
Riemann sphere. If $P=[x_1: y_1]$ and $Q=[x_2 : y_2]$ are two points in $\P$, we define
 $$\rho(P,Q)=\frac{|x_1y_2-x_2y_1|_p}{\max\{|x_1|_{p},|y_1|_{p}\}\max\{|x_2|_{p},|y_{2}|_{p}\}}.$$
Viewing $\P$ as $\Qp\cup\{\infty\}$, the spherical distance of $z_1,z_2 \in \Qp\cup \{\infty\}$, can be described by 
 \begin{align*}
 \rho(z_1,z_2)&=\frac{|z_1-z_2|_{p}}{\max\{|z_1|_{p},1\}\max\{|z_2|_{p},1\}}  \qquad\mbox{if~}z_{1},z_{2}\in \Qp;\\
 \rho(z,\infty)&=\left\{
                    \begin{array}{ll}
                      1, & \mbox{if $|z|_{p}\leq 1$,} \\
                      1/|z|_{p}, & \mbox{if $|z|_{p}> 1$.}
                    \end{array}
                  \right.
 \end{align*}   

Remark that the restriction of the spherical metric on the ring $\Zp:=\{x\in \Q_p, |x|_p\leq 1\}$ of $p$-adic integers is same as the metric induced by the absolute value $|\cdot|_p$.

In what follows, we restrict ourselves to the case $p=2$ only.
We  recall the conditions under which a number in $\Q_2$ has a square root in $\Q_2$, then we present all possible quadratic extensions of $\Q_2$.

%An integer $a\in \mathbb{Z}$ is called a\emph{ quadratic residue modulo} $p$ if the equation $x^{2}\equiv a ~(\!\!\!\!\mod p)$ has a solution $x\in \mathbb{Z}$. The following lemma characterizes those $p$-adic integers which admit a square root in $\Qp$.
\begin{lemma}[\cite{Mahler81}]\label{solution} Let $a$ be a nonzero $2$-adic number with its $2$-adic expansion
$$ a=2^{v_{2}(a)}(a_0+a_{1}\cdot 2+a_{2}\cdot 2^{2}+\cdots)$$
where $a_0=1$ and $0\leq a_j \leq 1 \ (j\geq 1)$. The equation
$x^2=a$ has a solution $x\in \Q_2$ if and only if $v_{p}(a)$ is even and $a_1=a_2=0$.
% the following conditions are satisfied
%\, \indent \ \
%  \item[{\rm (i)}] $v_{p}(a)$ is even;
% \, \indent \ \
%  \item[{\rm (ii)}] $a_1=a_2=0$.
\end{lemma}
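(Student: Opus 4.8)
The plan is to separate the two conditions---parity of the valuation and the congruence $a_1=a_2=0$---and to reduce the whole statement to the single assertion that a $2$-adic \emph{unit} is a square exactly when it is congruent to $1$ modulo $8$. First I would record the easy valuation constraint: if $x^2=a$ then multiplicativity of $v_2$ gives $2v_2(x)=v_2(a)$, so $v_2(a)$ must be even, and conversely if $v_2(a)=2k$ I can write $a=2^{2k}u$ with $u$ a unit and note that $x=2^k y$ solves $x^2=a$ if and only if $y^2=u$. This collapses the problem to the unit $u=a_0+a_1\cdot 2+a_2\cdot 2^2+\cdots$ with $a_0=1$, and the conditions $a_1=a_2=0$ are precisely $u\equiv 1\pmod 8$.

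For the necessity of $u\equiv 1\pmod 8$, I would argue directly: a square root $y$ of a unit is itself a unit, hence odd, so $y\equiv 1,3,5,7\pmod 8$, and in every case $y^2\equiv 1\pmod 8$. Thus $u\equiv 1\pmod 8$, that is $a_1=a_2=0$.

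The substantive direction is sufficiency: assuming $u\equiv 1\pmod 8$, produce $y\in\mathbb{Z}_2$ with $y^2=u$. The obstacle here is that the classical Hensel lemma does not apply off the shelf, because for $f(t)=t^2-u$ the derivative $f'(t)=2t$ is never a unit in $\mathbb{Z}_2$, so the reduction modulo $2$ is degenerate. I would instead invoke the refined (quadratic-convergence) form of Hensel's lemma with the starting point $y_0=1$: one has $|f(y_0)|_2=|1-u|_2\le 2^{-3}$ while $|f'(y_0)|_2^2=|2|_2^2=2^{-2}$, so the strict inequality $|f(y_0)|_2<|f'(y_0)|_2^2$ holds and guarantees a root converging from $y_0$.

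If I prefer to keep the argument self-contained, I would replace the appeal to Hensel by an explicit successive approximation, constructing odd integers $y_n$ with $y_n^2\equiv u\pmod{2^n}$ and $y_{n+1}\equiv y_n\pmod{2^{n-1}}$. The base case is $y_3=1$. For the inductive step, writing $y_{n+1}=y_n+t\,2^{n-1}$ and expanding $y_{n+1}^2=y_n^2+t\,y_n\,2^n+t^2\,2^{2n-2}$, the last term vanishes modulo $2^{n+1}$ once $n\ge 3$; if $y_n^2-u=2^n s$, then solving $s+t\,y_n\equiv 0\pmod 2$---which is possible because $y_n$ is odd, hence invertible modulo $2$---upgrades the congruence to $y_{n+1}^2\equiv u\pmod{2^{n+1}}$. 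The sequence $(y_n)$ is Cauchy in $\mathbb{Z}_2$ and its limit $y$ satisfies $y^2=u$. Either route completes the reduction and hence the lemma; the only genuinely delicate point is the careful bookkeeping of powers of $2$ in the sufficiency step, where the failure of the naive Hensel condition must be circumvented.
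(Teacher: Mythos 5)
Your proof is correct. Note, however, that the paper does not prove this lemma at all: it is quoted verbatim from Mahler's book \cite{Mahler81}, so there is no internal argument to compare against. Your write-up is the standard textbook proof, and all the delicate points are handled properly: the reduction $a=2^{2k}u$, $x=2^k y$ cleanly separates the valuation condition from the unit condition; the necessity direction is just the observation that every odd residue squares to $1$ modulo $8$; and for sufficiency you correctly identify that naive Hensel lifting fails (since $f'(t)=2t$ is never a unit) and circumvent it either by the strong form of Hensel's lemma, where your verification $|f(1)|_2\le 2^{-3}<2^{-2}=|f'(1)|_2^2$ is exactly right, or by the explicit induction with increment $t\,2^{n-1}$, where the key inequality $2n-2\ge n+1$ for $n\ge 3$ (forcing the quadratic term $t^2 2^{2n-2}$ to vanish modulo $2^{n+1}$) is precisely why the hypothesis must be taken modulo $8$ rather than modulo $2$ or $4$. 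Either route is complete; nothing is missing. One cosmetic remark: the condition ``$v_p(a)$ is even'' in the statement is a typo in the paper for $v_2(a)$, and your proof correctly treats it as such.
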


%For the case where $\sqrt{\Delta}\notin \Q_2$, we need to study the affine systems on some  quadratic extension of $\Q_2$.
%Two distinct elements $a$ and $a^{\prime}$ of $\Q_2$, neither of which is $0$ or the square of a $2$-adic number, evidently produce the same extension field
%$$\Q_2(\sqrt{a})=\Q_2(\sqrt{a^{\prime}})$$
%if and only if the quotient $a/a^{\prime}$ is the square of a $2$-adic number. Actually, there are $7$ distinct quadratic extensions of $\Q_{2}$.
\begin{lemma}[ {\cite[Theorem 1, p.72]{Mahler81}}]\label{7extension}
\item[{\rm (i)}] For $a, a^{\prime}\in \Q_2\setminus \{0\}$, 
$\Q_2(\sqrt{a})=\Q_2(\sqrt{a^{\prime}})$  if and only if the quotient $a/a^{\prime}$ is a square of a $2$-adic number.
\item[{\rm (ii)}] There are exactly $7$ distinct quadratic extensions of $\Q_2$ which are represented respectively by
$$\Q_2(\sqrt{-1}),\Q_2(\sqrt{2}),\Q_2(\sqrt{-2}),\Q_2(\sqrt{3}),\Q_2(\sqrt{-3}),\Q_2(\sqrt{-6}) \hbox{  and } \Q_2(\sqrt{6}).$$
%If $p\geq 3$, then $\Q_p$ has exactly $3$ distinct quadratic
%extensions: %, and these may be represented by
%$$\Qp(\sqrt{N_p}),\Qp(\sqrt{p}),\Qp(\sqrt{pN_p}),$$
%where $N_p<p$ is the smallest positive integer which is not a quadratic residue modulo $p$.
\end{lemma}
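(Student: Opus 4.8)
The plan is to route everything through the square-class group $G:=\Q_2^*/(\Q_2^*)^2$, since quadratic extensions of $\Q_2$ correspond exactly to nontrivial elements of $G$. Part (i) is the assertion that the assignment $a(\Q_2^*)^2\mapsto \Q_2(\sqrt{a})$ is well defined and injective, while part (ii) amounts to computing $|G|=8$ and checking that the seven listed numbers represent the seven nontrivial classes. The only external input needed is the square-root criterion of Lemma \ref{solution}.

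For part (i) I would argue both implications directly. The easy direction: if $a/a'=c^2$ with $c\in\Q_2^*$, then $\sqrt{a}=c\sqrt{a'}$ for a compatible choice of roots, so $\Q_2(\sqrt{a})=\Q_2(\sqrt{a'})$. For the converse, set $K=\Q_2(\sqrt{a})=\Q_2(\sqrt{a'})$. If $a$ is a square then $K=\Q_2$, forcing $a'$ to be a square as well (otherwise $[K:\Q_2]=2$), and then $a/a'$ is a square. If $a$ is not a square, then $[K:\Q_2]=2$ and $\{1,\sqrt{a}\}$ is a $\Q_2$-basis of $K$; writing $\sqrt{a'}=c+d\sqrt{a}$ with $c,d\in\Q_2$ and squaring gives $a'=c^2+d^2a$ together with $2cd=0$. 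The possibility $d=0$ would put $\sqrt{a'}\in\Q_2$, contradicting $[K:\Q_2]=2$, so $c=0$ and $a'=d^2a$, i.e. $a/a'=1/d^2\in(\Q_2^*)^2$. This is the one place where a genuine field-theoretic argument (linear independence of $1$ and $\sqrt{a}$) is used.

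For part (ii) I would extract from Lemma \ref{solution} a clean invariant for the square class of a nonzero $a$. Writing $a=2^{v_2(a)}u$ with $u$ a unit, Lemma \ref{solution} says $a\in(\Q_2^*)^2$ precisely when $v_2(a)$ is even and $u\equiv 1\m2{3}$; more generally $a/a'$ is a square iff $v_2(a)\equiv v_2(a')\ (\mathrm{mod}\ 2)$ and $u\equiv u'\m2{3}$. Hence the square class of $a$ is determined exactly by the pair $\big(v_2(a)\bmod 2,\ u\bmod 8\big)\in\{0,1\}\times\{1,3,5,7\}$, which yields $|G|=2\cdot 4=8$ and therefore $7$ nontrivial classes. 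By part (i) these give $7$ pairwise distinct quadratic extensions. It then remains to compute the invariants of the listed representatives, which realize all seven nontrivial pairs: $-1\mapsto(0,7)$, $3\mapsto(0,3)$, $-3\mapsto(0,5)$, $2\mapsto(1,1)$, $6\mapsto(1,3)$, $-6\mapsto(1,5)$, $-2\mapsto(1,7)$.

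The substantive content is the count $|\Q_2^*/(\Q_2^*)^2|=8$; the main obstacle is simply translating Lemma \ref{solution} into the statement that $(v_2\bmod 2,\,u\bmod 8)$ is a complete invariant of the square class, and confirming that the eight combinations are each attained and pairwise inequivalent. Once this is in place, the verification that $-1,2,-2,3,-3,-6,6$ exhaust the nontrivial classes is a short, explicit check, and part (i) guarantees the corresponding fields are distinct.
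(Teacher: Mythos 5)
Your proof is correct, but note that the paper itself contains no proof of this lemma: it is imported verbatim from Mahler \cite{Mahler81}, so there is no internal argument to compare against. What you have written is essentially the standard proof (and in substance the one in the cited source): identify the square-class group $G=\Q_2^*/(\Q_2^*)^2$ via the complete invariant $\bigl(v_2(a)\bmod 2,\ u\bmod 8\bigr)$ extracted from Lemma \ref{solution}, conclude $|G|=8$, hence seven nontrivial classes, and verify that $-1,3,-3,2,6,-6,-2$ realize the seven nontrivial pairs. Your details check out: the field-theoretic step in (i) (writing $\sqrt{a'}=c+d\sqrt{a}$, using $2cd=0$, which forces $c=0$ since $d=0$ would contradict $[K:\Q_2]=2$ and $\mathrm{char}\,\Q_2=0$) is sound, and the translation of Lemma \ref{solution} into the invariant is right --- the key identity $u-u'=u'\bigl(u/u'-1\bigr)$ with $|u'|_2=1$ shows $u/u'\equiv 1 \pmod 8$ iff $u\equiv u' \pmod 8$, and your table of invariants $(0,7),(0,3),(0,5),(1,1),(1,3),(1,5),(1,7)$ is accurate. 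The one step you assert but do not justify is that \emph{every} quadratic extension of $\Q_2$ is of the form $\Q_2(\sqrt{a})$, which is needed for the word ``exactly'' in (ii); this is the standard completing-the-square argument for a generator's minimal polynomial, valid because $\mathrm{char}\,\Q_2\neq 2$, and with that one sentence added your argument is a complete, self-contained proof of the cited result.
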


\medskip
%\subsection{Terminology of the theory of dynamical systems and $p$-adic repeller}
Now we recall some standard terminology of the theory of dynamical systems. 
%A {\em dynamical system} is a couple $(X, \phi)$, where $X$ is a topological space and $\phi$ is a continuous transformation from $X$ to itself. A subset $Y\subset X$ is {\em invariant} if $\phi(Y)\subset Y$. For such an invariant subset, let $\phi|_Y$ be the restriction of $\phi$ on $Y$. Then $(Y, \phi|_Y)$ becomes a {\em subsystem}. For simplicity, sometimes we still denote by $\phi$ the restricted transformation $\phi|_Y$. 
%
%A main interest in the theory of dynamical systems is to study the topological properties of the orbit $\{\phi^{ n}(x)\}_{n\geq 1}$ of a point $x\in X$ under the iterations $\phi^{ n}$.

A point $x_0\in X$ is called a {\em fixed point} of $\phi$ if $\phi(x_0)= x_0$. 
For a fixed point $x_0\in \Q_p$, the derivative $\phi^{\prime}(x_0)$ (if exists) is called the multiplier of $x_0$. Remark that the  multiplier is invariant by changing of coordinate.  If $\infty$ is a fixed point, then the multiplier of $\infty$ is $\psi^{\prime}(0)$, where $\psi(x)=\frac{1}{\phi(1/x)}$.
A fixed point is called \emph{attracting, indifferent,} or \emph{repelling} accordingly as the absolute value of its multiplier is less than, equal to, or greater than $1$. Fixed points of multiplier $0$ are called \emph{super attracting}.

%A point $x_0\in X$ is called a {\em fixed point} of $\phi$ if $\phi(x_0)= x_0$. %The set of all fixed points of $\phi$ is denoted by ${\rm Fix}(\phi).$   
%It is called a \emph{periodic point }of $\phi$ if $\phi^{ n}(x_0)=x_0$ for some $n\geq 1$. In this case, $n$ is called a \emph{period} of $x_0$, and the smallest $n$ with this property is called the \emph{exact period} of $x_0$.
%For a  periodic  point $x_0\in \Q_p$ of exact period $n$, the derivative $(\phi^{ n})^{\prime}(x_0)$ (if exists) is called the multiplier of $x_0$. Remark that the  multiplier is invariant by changing of coordinate.  If $\infty$ is a periodic point of period $n$, then the multiplier of $\infty$ is $\psi^{\prime}(0)$, where $\psi(x)=\frac{1}{\phi^{ n}(1/x)}$.
%A  periodic point is called \emph{attracting, indifferent,} or \emph{repelling} accordingly as the absolute value of its multiplier is less than, equal to, or greater than $1$. Periodic points of multiplier $0$ are called \emph{super attracting}.

A subsystem of a dynamical system is {\it minimal} if the orbit of any point in the subspace is dense in the subspace. 

%There is an important class of dynamical systems: symbolic dynamical systems. 
For $m\geq 1$, call the finite set  $\mathcal{A}:=\{0, 1, \cdots, m-1\}$ an alphabet. The infinite product space $\Sigma_m:=\mathcal{A}^{\mathbb{N}}$ is said to be the symbolic space of $m$ symbols. The (left)-shift $\sigma$ on  $\Sigma$ is defined as 
$$\forall x=x_0x_1x_2\cdots \in \Sigma_m, \quad (\sigma(x))_i=x_{i+1}.$$
The couple $(\Sigma_m, \sigma)$ is called a {\em full shift} on $m$ symbols. Let $A$ be a $m\times m$ matrix with entries in $\{0,1\}$ and set
\[
\Sigma_A=\left\{(x_j)_{j\geq 0}\in \Sigma_m: A_{x_ix_{i+1}}=1, \ \forall i\geq 0\right\}.
\]
Then $\Sigma_A$ is an invariant subset of $\Sigma_m$. We thus obtain a subsystem $(\Sigma_A, \sigma)$ called a {\em subshift of finite type}.

A large family of $p$-adic dynamical systems are conjugate to full shift or subshift of finite type. Let $f: X \to \Q_p$ be a map from a compact open set $X$ of $\Q_p$ into $\Q_p$ with $f(X)\supset X$. For such a map $f$, define 
\begin{equation}\label{Julia}
    \mathcal{K}_{f,X} := \bigcap_{n=0}^\infty f^{-n}(X).
\end{equation}
It is clear that $f^{-1}(\mathcal{K}_{f,X} ) = \mathcal{K}_{f,X} $ and then $f(\mathcal{K}_{f,X} ) \subset
\mathcal{K}_{f,X} $. Hence, we have a subsystem $(\mathcal{K}_{f,X},f)$. The subset $\mathcal{K}_{f,X} $ is sometimes called filled Julia set. For a complex dynamical system, the Julia set is the boundary of filled Julia set in $X$. However, it is not true for in $\Q_p$, since $\Q_p$ is not algebraically  closed.
 
 Assume that  $X= \bigsqcup_{i\in I}
D(c_i, {p^{-\tau}})$ can be written as a finite disjoint union of
disks of centers $c_i$ and of the same radius $p^{-\tau}$ (with
some $\tau \in \mathbb{Z}$) such that for each $i\in I$ there is
an integer $\tau_i \in \mathbb{Z}$ such that
\begin{equation}\label{expandingness}
  \forall x, y \in D(c_i, {p^{-\tau}}), \quad  |f(x)-f(y)|_p=p^{\tau_i} |x-y|_p.
\end{equation}
The subsystem $(\mathcal{K}_{f,X} , f)$ is called a {\em $p$-adic weak
 repeller} if all $\tau_i$ in (\ref{expandingness}) are nonnegative, but at least
one is  positive.  

The chaotic  dynamical behavior of a $p$-adic weak repeller is determined by a matrix with entries in $\{0,1\}$.
Denote simply the disk $D(c_i, {p^{-\tau}})$ by $D_i$. For any $i \in I$, let
$$I_i :=\{j\in I: D_j\cap f(D_i)\neq\emptyset \} =\{j\in I: D_j\subset f(D_i) \},
$$
%(the second equality holds because of the expansiveness and of the ultrametric property). 
and define the {\em incidence matrix} $A=(A_{i,
j})_{I\times I}$ of $(\mathcal{J}_f, f)$, by
$$A_{ij}= 1 \ \ \mbox{\rm if} \  j\in I_i; \quad
A_{ij}= 0 \ \ \mbox{\rm otherwise}.
$$
The weak repeller $(\mathcal{K}_{f,X} , f)$ is called {\em transitive} if the matrix $A$ is irreducible.
In \cite{FLWZ07}, the authors proved the following theorem. 
\begin{theorem}[\cite{FLWZ07}, Theorem 1.1]\label{thm-repeller}
Let $(\mathcal{K}_{f,X} , f)$ be a transitive $p$-adic weak repeller in $\Q_p$ with incidence matrix $A$. Then  $(\mathcal{K}_{f,X} , f)$ is topologically conjugate to $(\Sigma_A, \sigma)$, i.e., there exists a continuous bijection $h:\mathcal{K}_{f,X}  \rightarrow \Sigma_A$ such that $h\circ f=\sigma\circ h$. Moreover, the   Julia set of the system $(\mathcal{K}_{f,X} ,f)$ is the whole space $\mathcal{K}_{f,X} $.
 %Moreover, the Julia set of $f$ is the same as filled Julia set, $\mathcal{K}_{f,X} =\mathcal{J}_f$.
 \end{theorem}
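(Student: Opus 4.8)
The plan is to realize the conjugacy through the natural itinerary (coding) map and to reduce everything to a combinatorial property of the incidence matrix. Since $X=\bigsqcup_{i\in I}D_i$ is a finite disjoint union of clopen disks and $f^n(x)\in X$ for every $x\in\mathcal{K}_{f,X}$ and every $n\ge 0$, I would define a map $h$ into the sequence space over the alphabet $I$ by $h(x)=(x_n)_{n\ge 0}$, where $x_n=i$ whenever $f^n(x)\in D_i$. The relation $f^{n+1}(x)=f(f^n(x))\in f(D_{x_n})$, together with the definition of $I_i$, shows $D_{x_{n+1}}\cap f(D_{x_n})\neq\emptyset$, i.e. $A_{x_n x_{n+1}}=1$, so $h$ actually maps into $\Sigma_A$. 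The conjugacy identity $h\circ f=\sigma\circ h$ is then immediate from the definition, and continuity of $h$ is automatic because the level-$n$ cylinders $[x_0\cdots x_{n-1}]=\{z:f^k(z)\in D_{x_k},\,0\le k<n\}$ are clopen.

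The geometric heart of the argument is to control these cylinders using the expansion hypothesis (\ref{expandingness}). On $D_{x_k}$ the map $f$ multiplies distances by $p^{\tau_{x_k}}$, so the nonnegativity of the $\tau_i$ forces $f(D_i)$ to be a union of complete disks of radius $p^{-\tau}$ (whence the second equality in the definition of $I_i$), and the preimage inside $D_{x_k}$ of any admissible successor disk is a disk of radius $p^{-\tau-\tau_{x_k}}$. Pulling back successively, the cylinder $[x_0\cdots x_{n-1}]$ is a disk of radius $p^{-\tau-(\tau_{x_0}+\cdots+\tau_{x_{n-2}})}$, and for an admissible word it is nonempty because $A_{x_k x_{k+1}}=1$ guarantees $D_{x_{k+1}}\subset f(D_{x_k})$ at every stage. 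Surjectivity of $h$ follows at once: for a sequence in $\Sigma_A$ the nested nonempty compact cylinders have nonempty intersection by compactness of $X$, producing a preimage point.

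The main obstacle is injectivity, equivalently that the cylinder diameters tend to $0$ along every admissible sequence. By the diameter formula this amounts to $\sum_{k\ge 0}\tau_{x_k}=\infty$ for each $(x_k)\in\Sigma_A$, i.e. every admissible path must visit an expanding vertex ($\tau_i>0$) infinitely often, and this is exactly where transitivity enters. I would argue combinatorially: a vertex $i$ with $\tau_i=0$ has $f(D_i)=D_j$ a single disk, hence out-degree one in $A$; if some admissible path eventually avoided all expanding vertices, it would after finitely many steps follow forced edges and enter a cycle made only of $\tau=0$ vertices, from which no other vertex is reachable. Irreducibility of $A$ would then force the whole graph to be this cycle, making every $\tau_i=0$ and contradicting the assumption that at least one is positive. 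Thus the expansion accumulates to infinity along every itinerary, the cylinders shrink to points, and $h$ is injective.

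Finally, $\mathcal{K}_{f,X}$ is compact (a closed subset of the compact set $X$) and its image is Hausdorff, so the continuous bijection $h$ is automatically a homeomorphism, yielding the topological conjugacy with $(\Sigma_A,\sigma)$. For the last assertion I would observe that the branching at the expanding vertices—reachable from every state by transitivity—makes $\Sigma_A$, hence $\mathcal{K}_{f,X}$, an infinite perfect set, so arbitrarily close distinct points sharing a long common itinerary exist; by the accumulated factor $p^{\tau_{x_0}+\cdots+\tau_{x_{n-1}}}$ their orbits are eventually driven into different disks, a separation bounded below by the minimal distance between two disks of the partition. Consequently no point of $\mathcal{K}_{f,X}$ is a point of equicontinuity, and the Julia set is the whole space $\mathcal{K}_{f,X}$.
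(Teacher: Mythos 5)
The conjugacy half of your argument is correct, and it is in substance the proof of the cited source: the paper itself gives no proof of this theorem, importing it verbatim from \cite{FLWZ07}, and the standard proof there is exactly your itinerary coding. Your key steps all check out: the exact scaling in (\ref{expandingness}) makes $f(D_i)$ a full disk of radius $p^{\tau_i-\tau}$ (so that ``meets'' equals ``contains'' in the definition of $I_i$), each admissible cylinder $[x_0\cdots x_{n-1}]$ is a nonempty disk of radius $p^{-\tau-(\tau_{x_0}+\cdots+\tau_{x_{n-2}})}$, surjectivity of $h$ follows by compactness of nested cylinders, and injectivity follows from your graph argument: a vertex with $\tau_i=0$ has out-degree one, so an admissible path eventually avoiding expanding vertices gets trapped in a cycle of non-expanding vertices from which nothing else is reachable, contradicting irreducibility unless all $\tau_i=0$. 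That argument is sound and complete.

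The gap is in your last paragraph, in the claim that ``branching at the expanding vertices'' makes $\Sigma_A$ infinite and perfect. An expanding vertex need \emph{not} branch in $A$: for $\tau_i>0$ the disk $f(D_i)$ contains $p^{\tau_i}$ disks of radius $p^{-\tau}$, but only those that happen to be components of $X$ contribute edges, and the rest of $f(D_i)$ may lie outside $X$ (the hypothesis is $f(X)\supset X$, not $f(X)\subset X$). Concretely, in $\Q_2$ take $\tau=2$, $D_1=D(0,\frac14)$, $D_2=D(1,\frac14)$, $f(x)=\frac{x}{2}+1$ on $D_1$ and $f(x)=\frac{x-1}{2}$ on $D_2$. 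Then $\tau_1=\tau_2=1>0$, $f(D_1)=D(1,\frac12)=D(1,\frac14)\sqcup D(3,\frac14)$ and $f(D_2)=D(0,\frac12)=D(0,\frac14)\sqcup D(2,\frac14)$, so $A$ is the irreducible permutation matrix of a $2$-cycle, $\Sigma_A$ has exactly two points, and $\mathcal{K}_{f,X}=\{0,1\}$ is a repelling $2$-periodic orbit. Your ``arbitrarily close distinct points sharing a long common itinerary'' then do not exist, and on $\mathcal{K}_{f,X}$ with the relative metric the family $\{f^n\}$ is trivially equicontinuous, so the perfect-set route collapses (indeed, in this degenerate case the ``moreover'' clause itself fails under the literal relative-topology reading, so no argument could rescue your version of it). The way the clause is actually used in this paper, e.g.\ to deduce $\mathcal{K}_{\phi,\Omega}\subset\mathcal{J}_\phi$ in the proof of Theorem \ref{a<1} via Proposition \ref{Prop3.9}, is with respect to \emph{ambient} neighborhoods of points of $\mathcal{K}_{f,X}$, and there no perfectness is needed: any small ambient disk $B$ around $x\in\mathcal{K}_{f,X}$ lies in a cylinder of $x$, and $f^n|_B$ scales distances by $p^{S_n}$ with $S_n=\tau_{x_0}+\cdots+\tau_{x_{n-1}}\to\infty$ (by your own graph argument), so eventually $f^n(B)$ covers the whole partition disk $D_{x_n}$ while $f^n(x)$ stays inside it, forcing a separation at least $p^{-\tau}$ independent of $B$. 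Replacing your perfect-set step by this expansion-of-small-balls argument closes the gap and matches how the statement is applied.
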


%\end{proposition}

More generally,  we can  apply this idea to ``generalized $p$-adic repellers". 
Assume that $f: X \to \Q_p$ is a map from a compact open set $X=\bigsqcup_{i\in I} D(c_i, {p^{-\tau_i}})$ into $\Q_p$  such that 
 $f(X) \supset X$ and  for each $i\in I$ there is
an integer $\gamma_i \in \mathbb{Z}$ such that
\begin{equation}\label{expandingness}
 \forall x, y \in D(c_i, {p^{-\tau_i}}), \quad  |f(x)-f(y)|_p=p^{\gamma_i} |x-y|_p.
\end{equation}
Denote simply the disk $D(c_i, {p^{-\tau_i}})$ by $D_i$.  The subsystem $(\mathcal{K}_{f,X} , f)$  is called a {\em generalized $p$-adic weak repeller} 
if the map  $f: X \to \Q_p$ satisfies  the following two conditions: \\
{\rm (i)} for each $i\in I$, $f(D_i)$ contains at least some $D_j$;\\
{\rm (ii)}  there exists at least one $i\in I$ such that $f(D_i)\supsetneq D_j$.

We define similarly the incidence matrix $A$ associated to $(\mathcal{K}_{f,X},f)$. The generalized $p$-adic weak repeller $(\mathcal{K}_{f,X} , f)$  is transitive if $A$ is transitive.

\begin{theorem}\label{general}
Let $(\mathcal{K}_{f,X} , f)$ be a transitive generalized $p$-adic weak repeller in $\Q_p$ with incidence matrix $A$. Then  $(\mathcal{K}_{f,X} , f)$ is topologically conjugate to $(\Sigma_A, \sigma)$, i.e., there exists a continuous bijection $h:\mathcal{K}_{f,X}  \rightarrow \Sigma_A$ such that $h\circ f=\sigma\circ h$.  Moreover, the   Julia set of the system $(\mathcal{K}_{f,X} ,f)$ is the whole space $\mathcal{K}_{f,X} $.

 %Moreover, $\mathcal{K}_{f,X} =\mathcal{J}_f$.
\end{theorem}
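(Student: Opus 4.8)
The plan is to realize the conjugacy through the itinerary (coding) map, following the scheme of Theorem~\ref{thm-repeller} of \cite{FLWZ07}, but paying attention to the new feature that the disks $D_i$ now carry different radii $p^{-\tau_i}$. First I would record the basic geometric facts. Since $f$ scales distances on $D_i$ by the constant factor $p^{\gamma_i}$ (the scaling hypothesis \eqref{expandingness}), it is injective on $D_i$ and maps it onto the disk $f(D_i)=D(f(c_i),p^{\gamma_i-\tau_i})$, the restriction $f|_{D_i}$ being a bijective scaled isometry onto this image. As any two disks in $\Q_p$ are either disjoint or nested, the condition $D_j\cap f(D_i)\neq\emptyset$ together with~(i) forces $D_j\subset f(D_i)$: indeed $f(D_i)$ already contains some $D_{j'}$, so if instead $f(D_i)\subsetneq D_j$ we would get $D_{j'}\subset D_j$, contradicting the disjointness of distinct disks. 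Thus $D_j\cap f(D_i)\neq\emptyset$ is equivalent to $A_{ij}=1$. Consequently, for $x\in\mathcal{K}_{f,X}$ the itinerary $h(x)=(x_n)_{n\geq 0}$, defined by $f^n(x)\in D_{x_n}$, is admissible, i.e.\ $h(x)\in\Sigma_A$, and the intertwining $h\circ f=\sigma\circ h$ holds by construction.

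Next I would establish surjectivity and set up the estimate governing injectivity. Given an admissible sequence $(x_n)\in\Sigma_A$, form the nested sets $E_n:=D_{x_0}\cap f^{-1}(D_{x_1})\cap\cdots\cap f^{-n}(D_{x_n})$. Using that each $f|_{D_{x_k}}$ is a scaled isometry onto $f(D_{x_k})\supset D_{x_{k+1}}$, an induction shows that every $E_n$ is a nonempty disk, that $f^n$ maps $E_n$ bijectively onto $D_{x_n}$ with scaling factor $p^{\sum_{k=0}^{n-1}\gamma_{x_k}}$, and hence that
\[
\operatorname{diam}(E_n)=p^{-\tau_{x_n}-\sum_{k=0}^{n-1}\gamma_{x_k}}=p^{-\tau_{x_0}-\sum_{k=0}^{n-1}\delta_k},\qquad \delta_k:=\gamma_{x_k}-\tau_{x_k}+\tau_{x_{k+1}}\geq 0,
\]
the last equality by telescoping. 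By completeness $\bigcap_n E_n$ is nonempty, which yields a preimage of $(x_n)$ and hence surjectivity; it is a single point (so that $h$ is injective) exactly when $\operatorname{diam}(E_n)\to 0$, i.e.\ when $\sum_k\delta_k=\infty$. Note that $\delta_k\geq 1$ precisely when the transition is strictly expanding, $f(D_{x_k})\supsetneq D_{x_{k+1}}$, while $\delta_k=0$ forces $f(D_{x_k})=D_{x_{k+1}}$.

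The crux, and the step I expect to be the main obstacle, is to show that along every infinite admissible path the strictly expanding transitions occur infinitely often, so that $\sum_k\delta_k=\infty$. The key observation is that a non-expanding edge can leave only a vertex of out-degree one: if $\delta_k=0$ then $f(D_{x_k})=D_{x_{k+1}}$, so a second outgoing edge $x_k\to j$ with $j\neq x_{k+1}$ would give $D_j\subset f(D_{x_k})=D_{x_{k+1}}$, contradicting disjointness. Hence the vertices carrying non-expanding out-edges, together with those edges, form a subgraph in which each vertex has a unique successor; were an infinite path eventually trapped there, it would settle into a deterministic cycle of out-degree-one vertices from which no further vertex is reachable. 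In particular the source of the expanding edge furnished by~(ii) — which, having a strictly expanding out-edge, cannot lie on such a cycle — would be unreachable, contradicting the irreducibility (transitivity) of $A$. Therefore every infinite admissible path meets strictly expanding edges infinitely often, $\operatorname{diam}(E_n)\to 0$, and $h$ is a bijection.

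Finally I would upgrade $h$ to a topological conjugacy and identify the Julia set. Since $X$ is a finite union of closed, hence compact, disks, $\mathcal{K}_{f,X}=\bigcap_n f^{-n}(X)$ is compact, and $\Sigma_A$ is compact as well. Continuity of $h$ is immediate because points sharing a long initial itinerary lie in a common $E_n$ of small diameter; a continuous bijection between compact Hausdorff spaces is a homeomorphism, and combined with $h\circ f=\sigma\circ h$ this gives the asserted conjugacy. For the last assertion, given $x\in\mathcal{K}_{f,X}$ and any neighborhood, transitivity of $\Sigma_A$ supplies points $y$ arbitrarily close to $x$ whose itineraries first differ from that of $x$ at some index $n$; then $f^n(x)$ and $f^n(y)$ lie in distinct disks $D_i$, so their spherical distance is bounded below by a fixed positive constant. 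Hence $\{f^n\}$ fails to be equicontinuous at every point, and the Julia set of $(\mathcal{K}_{f,X},f)$ is the whole space.
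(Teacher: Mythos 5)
Your proposal is correct, but it takes a genuinely different route from the paper's. The paper proves Theorem \ref{general} in a few lines by a change of variables: setting $\tau=\max_{i\in I}\tau_i$, it conjugates $f$ by the piecewise scaling map $g(x)=c_i+p^{\tau-\tau_i}(x-c_i)$ on $D(c_i,p^{-\tau_i})$, which rescales all disks to the common radius $p^{-\tau}$, observes that conditions (i) and (ii) turn $f^{g}=g\circ f\circ g^{-1}$ into a map that is non-contracting on every ball and expanding on at least one, notes that the incidence matrix is unchanged, and then simply quotes Theorem \ref{thm-repeller} of \cite{FLWZ07}. You instead re-prove the coding theorem from scratch in the variable-radius setting: itinerary map, nested disks $E_n$, the telescoped diameter formula with increments $\delta_k=\gamma_{x_k}-\tau_{x_k}+\tau_{x_{k+1}}\geq 0$, and---the real heart of your argument---the observation that a non-expanding edge ($\delta_k=0$, equivalently $f(D_{x_k})=D_{x_{k+1}}$) can only issue from an out-degree-one vertex, so that irreducibility together with the single strictly expanding edge supplied by (ii) prevents any admissible path from being eventually trapped among non-expanding edges; hence $\sum_k\delta_k=\infty$ and $h$ is injective. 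This combinatorial lemma is precisely what the paper's rescaling trick absorbs into the already-proven Theorem \ref{thm-repeller}. Your route buys a self-contained proof that makes visible exactly where transitivity is used; it also quietly repairs a point where the paper's reduction is informal, since $f^{g}$ is in fact only a \emph{piecewise} scaling on each ball $g(D_i)$ (the factor $p^{\gamma_i-\tau_i+\tau_j}$ depends on the target disk $D_j$), so the letter of the scaling hypothesis of Theorem \ref{thm-repeller} is not verified by the conjugated map without an extra word. The paper's route buys brevity and the automatic transfer of all conclusions, including the Julia-set clause, through the homeomorphism $g$.

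One caveat on your final step. Deducing failure of equicontinuity at every $x\in\mathcal{K}_{f,X}$ from ``points $y$ arbitrarily close to $x$ whose itinerary differs at some index'' presupposes that $\Sigma_A$ has no isolated points. Irreducibility alone does not give this: if every vertex has out-degree one, then $A$ is the permutation matrix of a single cycle and $\Sigma_A$ is finite, and this is compatible with condition (ii), since a strictly expanding image $f(D_i)$ may contain only one disk of the family. In that degenerate case your nearby $y$ does not exist (and the Julia-set clause itself becomes vacuous for the subsystem, an edge case already present in the statement of Theorem \ref{thm-repeller} as quoted). Perfectness of $\Sigma_A$ does follow as soon as some vertex has out-degree at least two---which holds in all the incidence matrices actually arising in this paper---so your write-up needs only one added sentence either excluding the single-cycle case or exhibiting a branching vertex before running the sensitivity argument.
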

\begin{proof}
Let $\tau=\max_{i\in I}\{  \tau_i\}$ and $X^{g}=\bigsqcup_{i\in I} D(c_i, {p^{-\tau}})$. Take a map  $g: X\to X^{g}$ 
such that $$
   g(x)=   \left\{ \begin{array}{ll}
         c_i, & \mbox{if $x=c_i$ for some $i\in I$;} \\
        c_i+p^{\tau-\tau_i}(x-c_i), & \mbox{if  $x \in D(c_i, {p^{-\tau_i}})$ for some  $i\in I$.} 
       \end{array}
       \right.
$$
Then the system $(X, f)$ is topologically conjugate to  $(X^{g}, f^{g}=g\circ f \circ g^{-1}) $. Note that $X^{g}$ consists of ball of same the radius $p^{-\tau}$.   The condition (i)  implies that  $f^{g}$ is non contracting  on each ball $g(D_i)$ of $X^{g}$ and  the condition (ii)  implies $f^{g}$  is expanding  on at least one ball of $X^{g}$.
 Observing that the associated incidence matrix does not change by the topological conjugacy $g$, we conclude by applying Theorem \ref{thm-repeller}.
\end{proof} 
%For such a map $f$, define its Julia set by
%\begin{equation}\label{Juilia}
%    \mathcal{J}_f = \bigcap_{n=0}^\infty (f^{-1})^{\circ n}(X).
%\end{equation}
%It is clear that $f^{-1}(\mathcal{J}_f) = \mathcal{J}_f$ and then $f(\mathcal{J}_f) \subset
%\mathcal{J}_f$. We thus have a subsystem $(\mathcal{J}_f, f)$. 
%Such a subsystem is called a {\em $p$-adic weak
% repeller} if all $\tau_i$ in (\ref{expandingness}) are nonnegative, but at least
%one is  positive.  

\medskip
\section{Dynamical systems}

Recall that we need to study the rational map $\phi(x)=ax+\frac{1}{x}, a\in \Q_2$ for the rest cases $|a|_2>1$ and $|a|_2<1$.  An easy computation shows that 
$$\phi(x)-\phi(y)=(a-\frac{1}{xy})(x-y)$$
and $$\phi^{\prime}(x)=a-\frac{1}{x^2}.$$

%We can also consider the rational maps 
%\[
%ax+{b \over x}, \quad \text{with} \ a, b\in \Q_2.
%\]
%Remark that if $\sqrt{b}$ exists in $\Q_2$, then $ax+{b \over x}$ is conjugate to $ax+{1 \over x}$ by the conjugacy ${1 \over \sqrt{b}}x$.

\subsection{$|a|_2>1$} %It is obvious that $\infty$ is a fixed point. 
We distinguish two subcases: $\sqrt{1-a} \notin \Q_2$ and $\sqrt{1-a} \in \Q_2.$ 

\smallskip
In the first subcase, every point goes to infinity.
\begin{proposition}\label{prop-1}
Suppose $|a|_2>1$. If $\sqrt{1-a}\notin\Q_2$, then \[\forall x\in \Q_2, \quad \lim_{n\to\infty} \phi^n(x)=\infty.\]
\end{proposition}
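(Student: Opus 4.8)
The plan is to govern the orbits through the $2$-adic absolute value. Write $v_2(a)=-k$ with $k\geq 1$ (since $|a|_2>1$) and set the \emph{critical radius} $r_c:=1/\sqrt{|a|_2}=2^{-k/2}$. Using $\phi(x)=(ax^2+1)/x$, a direct valuation count produces three zones: on the \emph{outer zone} $\{|x|_2>r_c\}$ the term $ax$ dominates $1/x$, so $|\phi(x)|_2=|a|_2\,|x|_2>|x|_2$; on the \emph{inner zone} $\{0<|x|_2<r_c\}$ the term $1/x$ dominates, so $|\phi(x)|_2=1/|x|_2>\sqrt{|a|_2}>r_c$; and the two terms balance only on the \emph{critical sphere} $S(0,r_c)$, which is nonempty exactly when $k$ is even. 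I would first record that when $k$ is odd, $v_2(1-a)=v_2(a)=-k$ is odd, so $\sqrt{1-a}\notin\Q_2$ holds automatically and there is no critical sphere; thus the real content lies in the case $k=2m$ even.

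Next I would dispatch the two easy zones. On the outer zone the identity $|\phi(x)|_2=|a|_2\,|x|_2$ shows it is forward invariant and that $|\phi^n(x)|_2=|a|_2^{\,n}|x|_2\to\infty$, hence $\rho(\phi^n(x),\infty)\to 0$. On the inner zone, $|\phi(x)|_2=1/|x|_2>\sqrt{|a|_2}>r_c$ sends the point into the outer zone after one step, after which it escapes; and $x=0$ maps to $\infty$ directly. Consequently every orbit escapes to $\infty$ \emph{unless} it remains on the critical sphere for all time, so it suffices to prove that, under $\sqrt{1-a}\notin\Q_2$, no point of $S(0,r_c)$ has its image again in $S(0,r_c)$.

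This last step is the crux and the main obstacle. Writing $a=2^{-2m}a_0$ with $a_0$ a unit and $x=2^{m}u$ with $u$ a unit, one computes $\phi(x)=2^{-m}(a_0u^2+1)/u$, so that $v_2(\phi(x))=-m+v_2(a_0u^2+1)$, and $\phi(x)\in S(0,r_c)$ is equivalent to $v_2(a_0u^2+1)=2m$. The key arithmetic fact is that as $u$ ranges over the units, $u^2$ ranges over $1+8\Z2$, whence $a_0u^2+1$ ranges over $(a_0+1)+8\Z2$; therefore $v_2(a_0u^2+1)$ can equal $2m$ for some unit $u$ if and only if either $m=1$ and $a_0\equiv 3\pmod 8$, or $m\geq 2$ and $a_0\equiv 7\pmod 8$. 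Finally I would match these residue conditions against Lemma~\ref{solution}: computing the unit part of $1-a=2^{-2m}(2^{2m}-a_0)$ modulo $8$ shows that $\sqrt{1-a}\in\Q_2$ holds precisely in those same two cases. Hence $\sqrt{1-a}\notin\Q_2$ forces $v_2(a_0u^2+1)\neq 2m$ for every unit $u$, so the image of the critical sphere misses it—landing in the outer zone, or in the inner zone (including possibly at $0$, which maps to $\infty$) in the borderline subcase $a_0\equiv 7,\ m=1$—and every point escapes to $\infty$. The delicate points to get right are exactly these residues modulo $8$ together with the small-$m$ subcase $a_0\equiv 7,\ m=1$, where escape takes two steps via the inner zone.
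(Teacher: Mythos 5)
Your proposal is correct and takes essentially the same route as the paper's proof: both dispatch the inner and outer zones via the identities $|\phi(x)|_2=|a|_2|x|_2$ and $|\phi(x)|_2=1/|x|_2$, reduce everything to the critical sphere $S(0,1/\sqrt{|a|_2})$ (empty when $v_2(a)$ is odd), and settle the sphere by matching residues modulo $8$ of the unit part of $a$ against the square-root criterion of Lemma \ref{solution}, including the two-step escape through the inner zone in the borderline subcase $v_2(a)=-2$, $a_0\equiv 7\pmod 8$. The only difference is presentational: you parametrize the sphere as $x=2^m u$ and use that unit squares are exactly $1+8\mathbb{Z}_2$, yielding the exact equivalence that the sphere can return to itself if and only if $\sqrt{1-a}\in\Q_2$, whereas the paper expands around $\pm 2^{-v_2(a)/2}$ and bounds the cross terms --- equivalent bookkeeping, with your version slightly sharper.
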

\begin{proof}
By the assumption $|a|_2>1$, for all $x\in \Q_2$ such that $|x|_2\geq 1$, we have 
\[|\phi(x)|_2=\left|ax+{1\over x}\right|_2=|ax|_2>|x|_2.\]
Thus the absolute values of the iterations $\phi^n(x)$ are strictly increasing. Hence
\begin{align}\label{attr-basin}
\lim_{n\to\infty} \phi^n(x)=\infty, \quad \text{for all } x\in \Q_2, |x|_2\geq 1.
\end{align}
That is to say, $\{x\in \Q_2: |x|_2\geq 1\}$ is included in the attracting basin of $\infty$.

Now we investigate the points in the ball $\{x\in \Q_2: |x|_2\leq1/2\}$. We partition this ball into two: $$A_1:=\left\{x\in \Q_2 :|x|_2\leq 1/2 \hbox{ and  } |ax|_2\neq {1\over  |x|_2}\right\}, \ A_2:=\left\{x\in \Q_2 :  |ax|_2={1\over  |x|_2}\right\}.$$  

If $x\in A_1$, then 
\[
|\phi(x)|_2=\max\left\{|ax|_2, {1 \over |x|_2}\right\} >1.
\]
Thus by (\ref{attr-basin}), $\phi(x)$ falls in the attracting basin of $\infty$, and $\lim_{n\to\infty} \phi^n(x)=\infty$. 

For $x\in A_2$, we will study separately according to the parity of $v_2(a)$. %show $\phi(x)\in  A_1$, then the conclusion of the proposition follows.
%Hence, it suffices to prove that  $\phi(A_2)\in \Q_2 \setminus A_2$.

 When $v_2(a)$ is odd, $A_2=\emptyset$. 
So, we can conclude that    \[\forall x\in \Q_2, \quad \lim_{n\to\infty} \phi^n(x)=\infty.\]

When $v_2(a)$ is even, we distinguish two cases: (i) $v_2(a)=-2$ and (ii) $v_2(a)<-2$.
Note that $A_2=S(0,1/\sqrt{|a|_2})=D(\sqrt{|a|_2},\frac{1}{4\sqrt{|a|_2}})\cup D(-\sqrt{|a|_2},\frac{1}{4\sqrt{|a|_2}})$.
Let $x\in A_2$. Then we have $x=\sqrt{|a|_2}+y$ or $-\sqrt{|a|_2}+y$ for some $y\in D(0,\frac{1}{4\sqrt{|a|_2}})$. 
Without loss of generality, we assume $x=\sqrt{|a|_2}+y$ with $y\in D(0,\frac{1}{4\sqrt{|a|_2}})$.
So we have 
$$|\phi(x)|_2=\left|\frac{ax^2+1}{x}\right|_2=\frac{|2^{-v_2(a)}a+1+2\cdot a \cdot 2^{-v_2(a)/2}y+a\cdot y^2|_2 }{|x|_2}.$$
(i) $v_2(a)=-2$. By Lemmas \ref{solution} and \ref{7extension}, the assumption $\sqrt{1-a}\notin\Q_2$  implies $$2^{-v_2(a)}a= \pm1 \hbox{ or }-3  ~ \ ({\rm mod} \ 8).$$ 
If  $2^{-v_2(a)}a= 1 \hbox{ or }-3  ~(\!\!\!\!\mod 8),$ then  $$|\phi(x)|_2=\frac{|2^{-v_2(a)}a+1|_2}{|x|_2}=\frac{1}{2|x|_2}=1.$$
If  $2^{-v_2(a)}a= -1~(\!\!\!\!\mod 8),$ then  $$|\phi(x)|_2\leq \frac{1}{8|x|_2}=\frac{1}{4}.$$ So $\phi(x)\in A_1$ and we conclude by the above study of the points in $A_1$.
 \\
(ii) $v_2(a)$ is even and $v_2(a)\neq -2$. By Lemmas \ref{solution} and \ref{7extension} ,$\sqrt{1-a}\notin\Q_2$  if and only if $\sqrt{-a}\notin \Q_2$ which implies  $2^{-v_2(a)}a= 1$ or $\pm 3  ~(\!\!\!\!\mod 8)$. So we have 
$$|\phi(x)|_2=\frac{|2^{-v_2(a)}a+1|_2}{|x|_2}\geq \frac{1}{4|x|_2}\geq 1.$$
By (\ref{attr-basin}), the proof is completed.

%If $x\in A_2$, then $|ax|_2={1\over  |x|_2}$. Thus $|a|_2={1\over |x|_2^2}$, which means that $v_2(a)$ is an even number. 
% We write 
%\[
%-a={1 \over 2^{2k}} + {a_{1-2k} \over 2^{2k-1}} +{a_{2-2k} \over 2^{2k-2}}+\cdots = {1 \over 2^{2k}}(1+2a_{1-2k}+4a_{2-2k}+ \cdots),
%\]
%with $k\geq 2$. Then $\sqrt{1-a}\not\in \Q_2$ which implies that \\
%(i) $a_{1-2k}=1$ or, \\
%(ii) $a_{1-2k}=0$ and $a_{2-2k}=1$.
%
%Recall that we want to investigate all the points $x\in A_2$, i.e., the points $x\in \Q_2$ with
%$$|x|_2={1 \over \sqrt{|a|_2}}={1 \over 2^{k}}.$$
%Note that for all $x, y\in \Q_2$, with $|x|_2=|y|_2=2^{-k}$, we have
%\begin{align}\label{dist-estmates}
%\left|\phi(x)-\phi(y)\right|_2=\left|a-\frac{1}{xy}\right|_2\cdot |x-y|_2 \leq 2^{2k-1} |x-y|_2,
%\end{align}
%\[
%\phi(-2^k)=-{1 \over 2^k}-2^ka={a_{1-2k} \over 2^{k-1}} +{a_{2-2k} \over 2^{k-2}}+\cdots,
%\]
%and 
%\[
%\phi(2^k)={1 \over 2^k}+2^ka=-\left({a_{1-2k} \over 2^{k-1}} +{a_{2-2k} \over 2^{k-2}}+\cdots\right).
%\]
%
%If (i) $a_{1-2k}=1$, then $|\phi(-2^k)|_2=|\phi(2^k)|_2=2^{k-1}$. Thus the only two closed disks $D(-2^k, 2^{-k-2})$ and $D(2^k, 2^{-k-2})$
%of $A_2$ are sent into two disks of radius $2^{k-3}$ in the sphere $\{|x|_2=2^{k-1}\}$. Then $\phi(x) \in A_1$ and we have $\lim_{n\to\infty} \phi^n(x)=\infty$.
%
%If (ii) $a_{1-2k}=0$ and $a_{2-2k}=1$, then $|\phi(-2^k)|_2=|\phi(2^k)|_2=2^{k-2}$, Thus the only two closed disks $D(-2^k, 2^{-k-2})$ and $D(2^k, 2^{-k-2})$ of $A_2$
%are sent into the disk of radius $2^{k-3}$ in the sphere $\{|x|_2=2^{k-2}\}$. Then we still have $\phi(x) \in A_1$ and $\lim_{n\to\infty} \phi^n(x)=\infty$.

\end{proof}

Now we study the second subcase $\sqrt{1-a}\in \Q_2$. We first find the fixed points and then investigate the dynamical behaviors nearby the fixed points.
\begin{lemma}\label{lem3.2}
Suppose $|a|_2>1$. If $\sqrt{1-a}\in\Q_2$, then $\phi$ has two repelling fixed points $$x_{1,2}=\pm\frac{1}{\sqrt{1-a}}.$$
\end{lemma}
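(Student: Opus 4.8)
The plan is to find the fixed points explicitly by solving $\phi(x)=x$, and then compute the multiplier at each to verify they are repelling. First I would write the fixed-point equation $ax+\frac{1}{x}=x$, which after clearing denominators (valid since $x=0$ is not in the domain of $\phi$ as a finite fixed point) becomes $ax^2+1=x^2$, i.e. $(1-a)x^2=1$. Since $|a|_2>1$ we have $1-a\neq 0$, so the solutions are $x^2=\frac{1}{1-a}$, giving $x=\pm\frac{1}{\sqrt{1-a}}$. The hypothesis $\sqrt{1-a}\in\Q_2$ is exactly what guarantees these two roots actually lie in $\Q_2$, so $\phi$ has the two finite fixed points $x_{1,2}=\pm\frac{1}{\sqrt{1-a}}$ as claimed.

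Next I would verify that these are the only finite fixed points and then compute the multiplier. Using the formula $\phi'(x)=a-\frac{1}{x^2}$ recorded just before the lemma, I substitute $x^2=\frac{1}{1-a}$ (the same value for both roots) to get
\begin{align*}
\phi'(x_{1,2})=a-\frac{1}{x_{1,2}^2}=a-(1-a)=2a-1.
\end{align*}
Then I would estimate the $2$-adic absolute value: since $|a|_2>1$ we have $|2a|_2=|2|_2|a|_2=\frac{1}{2}|a|_2$. To compare $|2a-1|_2$ with $1$, I note $|a|_2\geq 2$ forces $|2a|_2=\frac{1}{2}|a|_2\geq 1$; when $|2a|_2>1$ the ultrametric inequality gives $|2a-1|_2=|2a|_2>1$, and when $|2a|_2=1$ (the borderline case $|a|_2=2$) I would check the residue of $2a-1$ directly to confirm $|2a-1|_2$ exceeds $1$ or handle that case separately. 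In every case the conclusion is $|\phi'(x_{1,2})|_2>1$, so both fixed points are repelling by the definition of a repelling fixed point given in the preliminaries.

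The only genuinely delicate point is the borderline case $|a|_2=2$ (that is $v_2(a)=-1$), where $|2a|_2=1$ and the ultrametric estimate is not immediately decisive. Here I would argue that the hypothesis $\sqrt{1-a}\in\Q_2$ via Lemma~\ref{solution} constrains $a$ so that $2a-1$ has absolute value strictly greater than $1$; since $v_2(a)=-1$ is odd, I should double-check compatibility with $\sqrt{1-a}\in\Q_2$, and indeed the parity/residue conditions of Lemma~\ref{solution} restrict which such $a$ occur, pinning down $|2a-1|_2$. I expect this residue bookkeeping to be the main obstacle; the rest is a routine substitution. Once the multiplier estimate $|2a-1|_2>1$ is established in all admissible cases, the lemma follows immediately.
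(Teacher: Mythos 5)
Your proposal is correct and follows essentially the same route as the paper: solve $(1-a)x^2=1$ for the two fixed points, compute the common multiplier $\phi'(x_{1,2})=2a-1$, and conclude $|2a-1|_2=|2a|_2>1$ by the ultrametric inequality. The one point you leave hedged — the borderline case $|a|_2=2$ — is in fact vacuous, and this is exactly how the paper streamlines the argument: since $|a|_2>1$ gives $v_2(1-a)=v_2(a)$, Lemma~\ref{solution} forces $v_2(a)$ to be even whenever $\sqrt{1-a}\in\Q_2$, so $|a|_2\geq 4$ and the ultrametric step applies without exception. Note also that the first branch of your hedge (checking residues to show $|2a-1|_2>1$ when $|2a|_2=1$) could never succeed, since $|2a|_2=1$ already forces $|2a-1|_2\leq\max\{|2a|_2,1\}=1$; the correct resolution is the parity exclusion you identify in your final paragraph.
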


\begin{proof}
It is easy to check that $x_1=\frac{1}{\sqrt{1-a}}$ and $x_2=-\frac{1}{\sqrt{1-a}}$ are the two fixed points of $\phi$.
Note that  
$$\phi^{\prime}(x_{1})=\phi^{\prime}(x_{2})=2a-1.$$ 
Since $|a|_2>1$ and $\sqrt{1-a}\in\Q_2$, by   Lemma \ref{solution}, $v_2(a)$ is even, which implies $|a|_2\geq 4$.
Hence,  we have 
$$|\phi^{\prime}(x_{1})|_2=|\phi^{\prime}(x_{2})|_2=|2a-1|_2=|2a|_2>1.$$ 
Therefore, both fixed points are repelling.
\end{proof}

\begin{lemma}\label{scaling}
Suppose $|a|_2>1$. If $\sqrt{1-a}\in\Q_2$,  then
$$\phi(D(x_1,\frac{1}{4\sqrt{|a|_2}}))=\phi(D(x_2,\frac{1}{4\sqrt{|a|_2}}))=D(x_1, \frac{\sqrt{|a|_2}}{8})$$
and 
$$\forall x,y \in  D( x_i, \frac{1}{4\sqrt{|a|_2}}), \quad  |\phi(x)-\phi(y)|_2= \frac{|a|_2 |x-y|_2}{2}.$$
\end{lemma}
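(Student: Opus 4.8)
The plan is to establish both equalities by exploiting the factorization $\phi(x)-\phi(y)=(a-\frac{1}{xy})(x-y)$ recorded at the start of the section, which converts the scaling statement into a computation of $|a-\frac{1}{xy}|_2$ for points in the relevant disks. First I would note from Lemma \ref{lem3.2} that $v_2(a)$ is even and $|a|_2\geq 4$, and that $x_1=\frac{1}{\sqrt{1-a}}$, $x_2=-x_1$ satisfy $|x_i|_2=\frac{1}{\sqrt{|a|_2}}$ (since $|1-a|_2=|a|_2$ when $|a|_2>1$). Thus the disks $D(x_i,\frac{1}{4\sqrt{|a|_2}})$ both sit inside the sphere $S(0,\frac{1}{\sqrt{|a|_2}})$, because the radius $\frac{1}{4\sqrt{|a|_2}}$ is strictly smaller than $\frac{1}{\sqrt{|a|_2}}$. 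Consequently any $x,y$ in one of these disks satisfy $|x|_2=|y|_2=\frac{1}{\sqrt{|a|_2}}$, whence $|xy|_2=\frac{1}{|a|_2}$ and $|\frac{1}{xy}|_2=|a|_2$.

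For the derivative-type estimate I would compute $|a-\frac{1}{xy}|_2$. Both $a$ and $\frac{1}{xy}$ have absolute value $|a|_2$, so a priori there could be cancellation; the key step is to show the leading terms cancel in a controlled way so that $|a-\frac{1}{xy}|_2=\frac{|a|_2}{2}$ exactly. Writing $x=x_1+s$, $y=x_1+t$ (or with $x_2$) with $s,t\in D(0,\frac{1}{4\sqrt{|a|_2}})$ and using $x_i^2=\frac{1}{1-a}$, I would expand $\frac{1}{xy}$ and compare with $a$. Since $\frac{1}{x_1^2}=1-a$, the quantity $a-\frac{1}{x_1^2}=2a-1$ reappears, matching $\phi'(x_i)=2a-1$ from Lemma \ref{lem3.2}; the perturbation by $s,t$ must be shown to be of strictly smaller absolute value than $|2a-1|_2=|2a|_2=\frac{|a|_2}{2}$. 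This is where the radius $\frac{1}{4\sqrt{|a|_2}}$ is chosen precisely so that the error terms stay below $\frac{|a|_2}{2}$, giving $|a-\frac{1}{xy}|_2=\frac{|a|_2}{2}$ and hence $|\phi(x)-\phi(y)|_2=\frac{|a|_2}{2}|x-y|_2$.

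For the image equality, the scaling factor tells us $\phi$ maps each disk of radius $\frac{1}{4\sqrt{|a|_2}}$ onto a disk of radius $\frac{|a|_2}{2}\cdot\frac{1}{4\sqrt{|a|_2}}=\frac{\sqrt{|a|_2}}{8}$, since on a non-Archimedean field an exact scaling by a constant sends a disk onto a disk of the correspondingly scaled radius (the image is $D(\phi(x_i),\frac{\sqrt{|a|_2}}{8})$). It remains to identify the common center: I would compute $\phi(x_1)=x_1$ (it is a fixed point), so the image of $D(x_1,\frac{1}{4\sqrt{|a|_2}})$ is $D(x_1,\frac{\sqrt{|a|_2}}{8})$. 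For the disk around $x_2=-x_1$, since $\phi$ is even in the sense that $\phi(-x)=-\phi(x)$, we get $\phi(x_2)=-\phi(x_1)=-x_1=x_2$; but one checks $D(x_1,\frac{\sqrt{|a|_2}}{8})=D(x_2,\frac{\sqrt{|a|_2}}{8})$ because $|x_1-x_2|_2=|2x_1|_2=\frac{1}{2\sqrt{|a|_2}}\leq\frac{\sqrt{|a|_2}}{8}$ (using $|a|_2\geq 4$), so the two image disks coincide.

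The main obstacle I anticipate is the exact evaluation $|a-\frac{1}{xy}|_2=\frac{|a|_2}{2}$ rather than merely a bound: one must verify that the $2$-adic valuations of the correction terms coming from $s,t$ are genuinely strictly larger than that of $2a$, so that the non-Archimedean triangle inequality yields equality and not just $\leq$. This requires carefully tracking the valuation $v_2(2a-1)=v_2(2a)=v_2(a)+1$ against the valuations contributed by $s,t$ and the constant $\frac{1}{4\sqrt{|a|_2}}$; the borderline factor of $\frac{1}{2}$ is exactly what makes the choice of radius sharp, and getting the inequality strict is the delicate point.
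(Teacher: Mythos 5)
Your proposal is correct and takes essentially the same route as the paper: both use the factorization $\phi(x)-\phi(y)=\left(a-\frac{1}{xy}\right)(x-y)$, reduce everything to the exact evaluation $\left|a-\frac{1}{xy}\right|_2=|2a-1|_2=\frac{|a|_2}{2}$ by ultrametric dominance of the perturbation terms, and then obtain the image equality from the exact scaling at the fixed points together with $x_2\in D(x_1,\frac{\sqrt{|a|_2}}{8})$. The only cosmetic difference is that the paper parametrizes the disk as $x=1/(\sqrt{1-a}+x^{\prime})$ with $|x^{\prime}|_2\le\frac{\sqrt{|a|_2}}{4}$, which makes $\frac{1}{xy}$ polynomial in the perturbations, whereas your direct expansion $x=x_1+s$ requires inverting via a convergent geometric series; the delicate step you flagged does close, since the error terms are bounded by $\frac{|a|_2}{4}<\frac{|a|_2}{2}$.
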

\begin{proof}
In Lemma \ref{lem3.2}, we have shown  $|a|_2\geq 4$. Since $|x_1-x_2|_2=\frac{1}{2\sqrt{|a|_2}}$, we have $x_2\in D(x_1, \frac{\sqrt{|a|_2}}{8}).$
Thus to prove Lemma \ref{scaling}, it suffices to show that 
$$|\phi(x)-\phi(y)|_2=\frac{|a|_2}{2}|x-y|_2$$
if  $x,y\in D(x_1,\frac{1}{4\sqrt{|a|_2}})$ or $D(x_2,\frac{1}{4\sqrt{|a|_2}})$.

Without loss of generality, we assume  $x,y \in D(x_1,\frac{1}{4\sqrt{|a|_2}})$.
Observe that  the disk $D(x_1,\frac{1}{4\sqrt{|a|_2}})$ is the image of $D({1 \over x_1},\frac{\sqrt{|a|_2}}{4})$ by the map $f(x)=1/x$, i.e. $$ D(x_1,\frac{1}{4\sqrt{|a|_2}})=\left\{\frac{1}{x}: |x-\sqrt{1-a}|_2 \leq \frac{\sqrt{|a|_2}}{4}\right\}.$$
Hence, there are $x^{\prime},y^{\prime}\in D(0,\frac{\sqrt{|a|_2}}{4})$  such that 
$$x=\frac{1}{\sqrt{1-a}+x^\prime} \ \  \hbox{ and } \ \  y=\frac{1}{\sqrt{1-a}+y^\prime}.$$
So  
$$\left|a-\frac{1}{xy}\right|_2=\left|2a-1+(x^{\prime}+y^{\prime})\sqrt{1-a}\right|_2.$$
Noting that $|a|_2\geq 4$ and  $|(x^{\prime}+y^{\prime})\sqrt{1-a}|_2 \leq |a|_2/4$, we have  
$$\left|a-\frac{1}{xy}\right|_2=\frac{|a|_2}{2}.$$
Hence, 
$$|\phi(x)-\phi(y)|_2=\left|(a-\frac{1}{xy})(x-y)\right|_2=\frac{|a|_2}{2}|x-y|_2.$$
 \end{proof}

The following lemma shows that the point $\infty$ is an attracting fixed point.
\begin{lemma}\label{attracting} Suppose $|a|_2>1$. If $\sqrt{1-a}\in\Q_2$, then 
 \[\ \quad \lim_{n\to\infty} \phi^n(x)=\infty.\]
 for all $x\in \Q_2\setminus(D(x_1,\frac{1}{4\sqrt{|a|_2}})\cup D(x_2,\frac{1}{4\sqrt{|a|_2}})).$
\end{lemma}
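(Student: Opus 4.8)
The plan is to reuse the two escape estimates already established in the proof of Proposition~\ref{prop-1} and to combine them with the observation that the two excluded disks together exhaust the critical sphere $S(0,1/\sqrt{|a|_2})$. The point of this lemma is precisely that, once the two small disks around the fixed points are removed, nothing of the ``dangerous'' sphere $A_2$ survives, so the remaining points escape for exactly the same reasons as in the case $\sqrt{1-a}\notin\Q_2$.

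First I would partition $\Q_2$ exactly as in Proposition~\ref{prop-1}. Since the $2$-adic absolute value takes no value strictly between $1/2$ and $1$, we may write
$$\Q_2=\{x\in\Q_2:|x|_2\ge 1\}\sqcup A_1\sqcup A_2,$$
with $A_1,A_2$ as there. The two computations in that proof --- namely $|\phi(x)|_2=|ax|_2>|x|_2$ when $|x|_2\ge 1$, and $|\phi(x)|_2=\max\{|ax|_2,1/|x|_2\}>1$ when $x\in A_1$ --- use only $|a|_2>1$ and not the hypothesis on $\sqrt{1-a}$. Hence every point of $\{|x|_2\ge 1\}\cup A_1$ lands in $\{|y|_2\ge 1\}$ after one step and therefore lies in the attracting basin of $\infty$, so its orbit tends to $\infty$.

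Second I would identify $A_2$ with the union of the two excluded disks. By Lemma~\ref{lem3.2} we have $|x_1|_2=|x_2|_2=1/\sqrt{|1-a|_2}=1/\sqrt{|a|_2}$, so both fixed points lie on $A_2=S(0,1/\sqrt{|a|_2})$. As already recorded in the proof of Proposition~\ref{prop-1}, this sphere is the disjoint union of exactly the two disks of radius $\tfrac{1}{4\sqrt{|a|_2}}$. Since $|x_1-x_2|_2=\tfrac{1}{2\sqrt{|a|_2}}>\tfrac{1}{4\sqrt{|a|_2}}$, the points $x_1$ and $x_2$ fall into distinct such disks, and consequently
$$D\!\left(x_1,\tfrac{1}{4\sqrt{|a|_2}}\right)\cup D\!\left(x_2,\tfrac{1}{4\sqrt{|a|_2}}\right)=A_2.$$

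Combining the two steps yields $\Q_2\setminus\bigl(D(x_1,\tfrac{1}{4\sqrt{|a|_2}})\cup D(x_2,\tfrac{1}{4\sqrt{|a|_2}})\bigr)=\{|x|_2\ge 1\}\cup A_1$, and every point of this set escapes to $\infty$, which is the assertion. I expect the only delicate point to be the radius bookkeeping in the second step: one must verify that in $\Q_2$ a disk of radius $2^{-k}$ splits into exactly two disks of radius $2^{-k-1}$, so that the critical sphere $A_2$ is tiled by precisely the two fixed-point disks with nothing left over. Everything else is a direct transcription of the escape estimates from Proposition~\ref{prop-1}, which hold verbatim because they never invoke the solvability of $\sqrt{1-a}$.
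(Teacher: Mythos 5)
Your proof is correct and takes essentially the same route as the paper's: both decompose the complement of the two disks into $\{x:|x|_2\ge 1\}$ plus the part of $D(0,1/2)$ off the critical sphere, both rest on the identity $D(x_1,\tfrac{1}{4\sqrt{|a|_2}})\cup D(x_2,\tfrac{1}{4\sqrt{|a|_2}})=S(0,\tfrac{1}{\sqrt{|a|_2}})$, and the paper's two cases ($|x|_2<1/\sqrt{|a|_2}$ and $1>|x|_2>1/\sqrt{|a|_2}$) are exactly your $\max\{|ax|_2,1/|x|_2\}>1$ estimate recycled from Proposition~\ref{prop-1}. Your explicit verification that $x_1,x_2$ lie on the sphere and fall into the two distinct disks of radius $\tfrac{1}{4\sqrt{|a|_2}}$ tiling it is a detail the paper merely asserts, so there is no gap.
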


\begin{proof}
For $x\in \Q_2\setminus D(0,1/2)$,  we have$|1/x|_2\leq1$. Then $$|\phi(x)|_2=\left|ax+\frac{1}{x}\right|_2=|a|_2|x|_2>1.$$
Hence, $$|\phi^n(x)|_2=|a|^n_2|x|_2,$$ 
which implies 
\[\forall \ x\notin D(0,1/2), \quad \lim_{n\to\infty} \phi^n(x)=\infty.\]
Thus, to conclude it  suffices to show  that 
\[ \forall 0\neq x\in D(0,1)\setminus \left(D(x_1,\frac{1}{4\sqrt{|a|_2}})\cup D(x_2,\frac{1}{4\sqrt{|a|_2}})\right), \quad |\phi(x)|_2\geq 1.\]
By noting that $D(x_1,\frac{1}{4\sqrt{|a|_2}})\cup D(x_2,\frac{1}{4\sqrt{|a|_2}})=S(0,\frac{1}{\sqrt{|a|_2}})$,
we distinguish two cases.\\
Case (1) $|x|_2<1/\sqrt{|a|_2}$. Then   $|1/x|_2 > |ax|_2$, and thus 
$$|\phi(x)|_2=|1/x|_2>1.$$
Case (2) $1>|x|_2>1/\sqrt{|a|_2}$. Then $ |ax|_2>|1/x|_2>1$, and hence
$$|\phi(x)|_2= |ax|_2>1.$$
 \end{proof}

Now we are ready to show the dynamical structure of the system for the second subcase and finish the proof of Theorem \ref{thm-2}.
\begin{proposition}\label{prop-2}
Suppose $|a|_2>1$. If $\sqrt{1-a}\in\Q_2$, then $$\mathcal{J}_{\phi} =\left\{x\in \Q_p: |\phi^{n}(x)|_2=\frac{1}{\sqrt{|a|_2}} \ \hbox{ for all } n\geq 0\right\}$$
and 
$(\mathcal{J}_{\phi},\phi)$ is topologically  conjugate to $(\Sigma_2, \sigma)$.
Moreover,  $$\forall x\in \Q_2\setminus \mathcal{J}_\phi, \quad \lim_{n\to \infty}\phi^n(x)=\infty.$$
\end{proposition}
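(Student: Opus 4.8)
The plan is to recognize the sphere $X := S(0,\frac{1}{\sqrt{|a|_2}})$ as a $2$-adic weak repeller and then invoke Theorem \ref{thm-repeller}. By the identity recorded in the proof of Lemma \ref{attracting}, $X$ is the disjoint union of the two disks $D_1 := D(x_1,\frac{1}{4\sqrt{|a|_2}})$ and $D_2 := D(x_2,\frac{1}{4\sqrt{|a|_2}})$ about the repelling fixed points. First I would use Lemma \ref{scaling} to note $\phi(D_1)=\phi(D_2)=D(x_1,\frac{\sqrt{|a|_2}}{8})$ and check that this common image contains both $D_1$ and $D_2$; the two inclusions reduce to the radius comparisons $\frac{1}{4\sqrt{|a|_2}}\le \frac{\sqrt{|a|_2}}{8}$ and $|x_1-x_2|_2=\frac{1}{2\sqrt{|a|_2}}\le \frac{\sqrt{|a|_2}}{8}$, both of which hold because $|a|_2\ge 4$ (established in Lemma \ref{lem3.2}). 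Consequently $\phi(D_i)\supset X$ for $i=1,2$, while Lemma \ref{scaling} shows $\phi$ expands distances on each $D_i$ by the factor $|a|_2/2\ge 2>1$. Thus $(\mathcal{K}_{\phi,X},\phi)$ is a $2$-adic weak repeller whose incidence matrix is $\left(\begin{smallmatrix} 1 & 1 \\ 1 & 1 \end{smallmatrix}\right)$, which is irreducible.

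Theorem \ref{thm-repeller} then yields at once that $(\mathcal{K}_{\phi,X},\phi)$ is topologically conjugate to $(\Sigma_A,\sigma)$, and since $A$ is the all-ones matrix every transition is allowed, so this is exactly the full shift $(\Sigma_2,\sigma)$; the same theorem tells us that the Julia set of the subsystem $(\mathcal{K}_{\phi,X},\phi)$ is all of $\mathcal{K}_{\phi,X}$. Because $X=S(0,\frac{1}{\sqrt{|a|_2}})$, unwinding the definition \eqref{Julia} gives
\[
\mathcal{K}_{\phi,X}=\bigcap_{n\ge 0}\phi^{-n}(X)=\left\{x : |\phi^n(x)|_2=\tfrac{1}{\sqrt{|a|_2}}\ \text{for all } n\ge 0\right\},
\]
which is the set claimed in the statement. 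It then remains only to identify $\mathcal{J}_\phi$ with $\mathcal{K}_{\phi,X}$.

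For the inclusion $\mathcal{K}_{\phi,X}\subset\mathcal{J}_\phi$ I would argue that non-equicontinuity persists under enlarging the ambient space: the Julia set of the subsystem being all of $\mathcal{K}_{\phi,X}$ means that at every $x_0\in\mathcal{K}_{\phi,X}$ there are points of $\mathcal{K}_{\phi,X}$ arbitrarily close to $x_0$ whose iterates separate from those of $x_0$ by a fixed amount, and these same points witness the failure of equicontinuity of $\{\phi^n\}$ on any neighborhood of $x_0$ in $\PP$; hence $x_0\in\mathcal{J}_\phi$. For the reverse inclusion I would show $\PP\setminus\mathcal{K}_{\phi,X}\subset\mathcal{F}_\phi$. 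If $x\notin\mathcal{K}_{\phi,X}$, then by definition some iterate $\phi^k(x)$ lies outside $X=S(0,\frac{1}{\sqrt{|a|_2}})$, and Lemma \ref{attracting} gives $\lim_{n\to\infty}\phi^n(x)=\infty$; since $\infty$ is an attracting fixed point (its multiplier has absolute value $1/|a|_2<1$), its attracting basin is open and the iterates converge to the constant $\infty$ locally uniformly there, so $\{\phi^n\}$ is equicontinuous on a neighborhood of each such point. This places $\PP\setminus\mathcal{K}_{\phi,X}$ in the Fatou set, whence $\mathcal{J}_\phi=\mathcal{K}_{\phi,X}$ and, simultaneously, the final assertion that every $x\in\PP\setminus\mathcal{J}_\phi$ satisfies $\phi^n(x)\to\infty$.

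The main obstacle, I expect, is twofold: the geometric verification that $\phi(D_i)\supset X$ for each $i$ (so that the incidence matrix is genuinely the full $2\times 2$ all-ones matrix, giving the full shift rather than a proper subshift), and the logical bookkeeping needed to confirm that the abstract ``Julia set of the subsystem'' delivered by Theorem \ref{thm-repeller} really coincides with the Fatou/Julia dichotomy of the ambient system $(\PP,\phi)$. The remaining steps are routine applications of Lemmas \ref{lem3.2}, \ref{scaling}, and \ref{attracting}.
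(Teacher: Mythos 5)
Your proposal is correct and follows essentially the same route as the paper: take $X=D(x_1,\frac{1}{4\sqrt{|a|_2}})\cup D(x_2,\frac{1}{4\sqrt{|a|_2}})$ (which is exactly $S(0,\frac{1}{\sqrt{|a|_2}})$), apply Theorem \ref{thm-repeller} to the resulting $2$-adic weak repeller to get the conjugacy with $(\Sigma_2,\sigma)$, and use Lemma \ref{attracting} to send every point of the complement of $\mathcal{K}_{\phi,X}$ to $\infty$, whence $\mathcal{J}_\phi=\mathcal{K}_{\phi,X}$. You merely make explicit two steps the paper leaves implicit---the verification via Lemma \ref{scaling} that the incidence matrix is the all-ones $2\times 2$ matrix, and the identification of the subsystem's Julia set with the ambient Fatou--Julia dichotomy---both of which check out.
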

\begin{proof} Take $X=D(x_1,\frac{1}{4\sqrt{|a|_2}})\cup D(x_2,\frac{1}{4\sqrt{|a|_2}})$. Consider the restriction  map  $\phi: X\to \mathbb{P}^{1}(\Q_2)$. Define $\mathcal{K}_{\phi, X}$ as (\ref{Julia}).
By Theorem \ref{thm-repeller}, the $p$-adic repeller ($\mathcal{K}_{\phi, X}, \phi)$ is topologically conjugate to the full shift $(\Sigma_{2},\sigma)$ and   the points  which do not lie in $\mathcal{K}_{\phi, X}$  are sent to $\mathbb{P}^{1}(\Q_2)\setminus X.$ By Lemma \ref{attracting},  $$\forall x\in \Q_2\setminus \mathcal{K}_{\phi,X},  \quad \lim_{n\to \infty}\phi^n(x)=\infty.$$ 
Hence we have $\mathcal{J}_{\phi}= \mathcal{K}_{\phi,X}$.
\end{proof}

\begin{proof}[Proof of Theorem \ref{thm-2}] Combining Propositions \ref{prop-1} and  \ref{prop-2}, we complete the proof.
\end{proof}
%\subsection{$|a|_2=1$}
%
%\begin{theorem}
%
%\end{theorem}
\medskip
\subsection{$|a|_2<1$}
We begin with two lemmas which will be useful.
\begin{lemma}\label{largesphere}
If $|a|_2<1$, then 
\begin{equation*}
|\phi(x)-\phi(y)|_2 =
\begin{cases}
|a|_2|x-y|_2 , \hbox{ if  } x, y \in S(0,2^i) \hbox{ with }  i> v_2(a)/2;\\
2^{-2i}|x-y|_2,\hbox{ if }  x, y \in S(0,2^i) \hbox{ with }  i\leq 0.
\end{cases}
\end{equation*} 
Moreover, 
\begin{equation*}
\begin{cases}
\phi(S(0,2^i)) =  S(0,|a|_2 2^i), \hbox{ for  } i> v_2(a)/2;\\
\phi(S(0,2^i))=  S(0,2^{-i}),\hbox{ for } i\leq  0.
\end{cases}
\end{equation*}
\end{lemma}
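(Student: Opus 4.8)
The plan is to deduce everything from the two identities recorded at the beginning of Section 3, namely $\phi(x)-\phi(y)=\bigl(a-\frac{1}{xy}\bigr)(x-y)$ and $\phi'(x)=a-\frac{1}{x^{2}}$, combined with the ultrametric inequality and one feature special to $p=2$: each sphere $S(0,2^{i})$ is a single disk. Throughout, recall that $|a|_2<1$ forces $v_2(a)\geq 1$, so the hypothesis $i\leq 0$ automatically gives $i<v_2(a)/2$; this is why the two stated ranges suffice for the dichotomy below, the intermediate sphere $S(0,2^{v_2(a)/2})$ being deliberately excluded and treated elsewhere.

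First I would establish the distortion formulas. For $x,y\in S(0,2^{i})$ one has $|xy|_2=2^{2i}$, hence $\bigl|\frac{1}{xy}\bigr|_2=2^{-2i}$, while $|a|_2=2^{-v_2(a)}$. Comparing these two absolute values: if $i>v_2(a)/2$ then $2^{-2i}<|a|_2$, so the two terms have distinct absolute values and $\bigl|a-\frac{1}{xy}\bigr|_2=|a|_2$; if $i\leq 0$ then $2^{-2i}>|a|_2$, so $\bigl|a-\frac{1}{xy}\bigr|_2=2^{-2i}$. Substituting into the factorization of $\phi(x)-\phi(y)$ yields the two distortion identities at once. Next I would obtain the image spheres as inclusions by computing $|\phi(x)|_2$ directly: comparing $|ax|_2=2^{i-v_2(a)}$ with $\bigl|\frac{1}{x}\bigr|_2=2^{-i}$, the same dichotomy gives $|\phi(x)|_2=|a|_2\,2^{i}$ when $i>v_2(a)/2$ and $|\phi(x)|_2=2^{-i}$ when $i\leq 0$. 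This yields $\phi(S(0,2^{i}))\subseteq S(0,|a|_2 2^{i})$ and $\phi(S(0,2^{i}))\subseteq S(0,2^{-i})$ respectively, so the real content is the reverse inclusion.

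For surjectivity I would use that, over $\Q_2$, the group of units $\mathbb{Z}_2^{*}=1+2\mathbb{Z}_2$ is a single disk, whence $S(0,2^{i})=D(2^{-i},2^{i-1})$ and, more generally, $S(0,\rho)=D(c,\rho/2)$ for any $c$ with $|c|_2=\rho$. On this single disk the distortion identity says $\phi$ is a scaling map, $|\phi(x)-\phi(y)|_2=C|x-y|_2$ with $C=|a|_2$ (resp. $C=2^{-2i}$); equivalently $|\phi'|_2\equiv C$ there. A scaling map carries a disk $D(c,r)$ bijectively onto $D(\phi(c),Cr)$, the inclusion being immediate and the surjectivity following from the standard $p$-adic scaling/Hensel argument. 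Taking $c=2^{-i}$ and inserting the value of $|\phi(c)|_2$ computed above, the image disk $D(\phi(c),C\cdot 2^{i-1})$ has radius exactly half the absolute value of its center, hence equals the full sphere $S(0,|a|_2 2^{i})$ (resp.\ $S(0,2^{-i})$). I expect the only genuine obstacle to be this last surjectivity of the scaling map onto the entire target disk; the remaining steps are bookkeeping with the ultrametric.
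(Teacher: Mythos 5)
Your proof is correct and, for the distortion formulas, follows exactly the paper's route: factor $\phi(x)-\phi(y)=\left(a-\frac{1}{xy}\right)(x-y)$, compare $|a|_2$ with $\left|\frac{1}{xy}\right|_2=2^{-2i}$ under the two hypotheses on $i$, and invoke the ultrametric equality for terms of distinct absolute value; likewise the computation of $|\phi(x)|_2$ by comparing $|ax|_2$ with $\left|\frac{1}{x}\right|_2$ matches the paper (which, incidentally, contains a typo in the first case: there $|ax|_2>\left|\frac{1}{x}\right|_2$, consistent with its conclusion $|\phi(x)|_2=|ax|_2$). Where you genuinely add something is the equality $\phi(S(0,2^i))=S(0,|a|_2 2^i)$ (resp.\ $S(0,2^{-i})$): the paper deduces it directly from the value of $|\phi(x)|_2$, which strictly speaking only gives the inclusion $\phi(S(0,2^i))\subseteq S(\cdot)$, while you supply the surjectivity by noting that over $\Q_2$ a sphere is a single disk, $S(0,2^i)=D(2^{-i},2^{i-1})$, and that a scaling map sends a disk $D(c,r)$ bijectively onto $D(\phi(c),Cr)$; your bookkeeping that $C\cdot 2^{i-1}$ is exactly half the absolute value of the image center in both cases is precisely what makes the image the full sphere. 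The scaling-surjectivity step you flag as the ``only genuine obstacle'' is indeed standard (an isometry of a compact ultrametric ball into itself is onto, or one can solve $ax^{2}-wx+1=0$ directly, noting $1-\frac{4a}{w^{2}}\equiv 1 \m2{3}$ is a square by Lemma \ref{solution}), so your argument is complete and slightly more careful than the paper's on this point.
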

\begin{proof}
Note that $$|\phi(x)-\phi(y)|_2=\left|a-\frac{1}{xy}\right|_2|x-y|_2.$$

If $ x, y \in S(0,2^i)$ with  $i> v_2(a)/2$, then  $|a|_2>\frac{1}{|xy|_2}$.
So 
$$|\phi(x)-\phi(y)|_2 =|a|_2|x-y|_2.$$  Moreover, $|ax|_2<|1/x|_2$. Thus
$|\phi(x)|_2=|ax+1/x|_2=|ax|_2$ which implies  $$\phi(S(0,2^i)) =  S(0,|a|_2 2^i).$$

If $ x, y \in S(0,2^i)$ with  $i\leq 0$, then $|a|_2<\frac{1}{|xy|_2}$. So
$$|\phi(x)-\phi(y)|_2 =2^{-2i}|x-y|_2.$$
Moreover, $|ax|_2<1\leq |1/x|_2$.  Hence
$|\phi(x)|_2=|ax+1/x|_2=|1/x|_2$ which implies  $$\phi(S(0,2^i))=  S(0,2^{-i}).$$
\end{proof}

\begin{lemma}\label{6}
If $|a|_2<1$, then 
for all $ - \lfloor  (v_{2}(a)-1)/2\rfloor \leq i \leq \lfloor  (v_{2}(a)-1)/2\rfloor$,  $$\phi(S(0,2^i))\subset   S(0,2^{-i})$$ and $\phi^{ 2}$ is $1$-Lipschitz continuous on   $S(0,2^i)\cup S(0,2^{-i})$.
\end{lemma}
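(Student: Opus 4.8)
The plan is to reduce everything to the local scaling behaviour of $\phi$ on a single sphere and then compose. Write $v:=v_2(a)$; since $|a|_2<1$ we have $v\ge 1$, and the hypothesis on $i$ gives $|i|\le\lfloor(v-1)/2\rfloor<v/2$, so $2|i|<v$ for every admissible index. The key consequence is that \emph{both} $i$ and $-i$ satisfy $|j|<v/2$, so I would first establish a uniform statement valid for any integer $j$ with $|j|<v/2$. For such $j$ and any $x,y\in S(0,2^j)$ one has $|1/(xy)|_2=2^{-2j}>2^{-v}=|a|_2$, whence by the identity $\phi(x)-\phi(y)=(a-1/(xy))(x-y)$ and the ultrametric inequality $|a-1/(xy)|_2=2^{-2j}$, giving
$$|\phi(x)-\phi(y)|_2=2^{-2j}\,|x-y|_2.$$
Moreover $|ax|_2=2^{j-v}<2^{-j}=|1/x|_2$ forces $|\phi(x)|_2=|1/x|_2=2^{-j}$, i.e. $\phi(S(0,2^j))\subset S(0,2^{-j})$. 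Applying this with $j=i$ yields at once the first assertion $\phi(S(0,2^i))\subset S(0,2^{-i})$ (this also follows directly from Lemma \ref{largesphere} when $i\le 0$).

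For the Lipschitz bound I would compose the two local estimates. By the uniform statement, $\phi$ scales distances on $S(0,2^i)$ by the factor $2^{-2i}$ and maps it into $S(0,2^{-i})$, while on $S(0,2^{-i})$ it scales distances by $2^{-2(-i)}=2^{2i}$ and maps it (back) into $S(0,2^i)$; in particular $\phi^2$ preserves each of the two spheres and is well defined there, since $\phi$ never reaches $0$ or $\infty$. Hence for $x,y$ lying in the same sphere $S(0,2^{\pm i})$,
$$|\phi^2(x)-\phi^2(y)|_2=2^{2i}\cdot 2^{-2i}\,|x-y|_2=|x-y|_2,$$
so $\phi^2$ is in fact an isometry on each sphere, and \emph{a fortiori} $1$-Lipschitz. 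The decisive point is this exact cancellation: the contraction $\phi$ produces on one sphere is compensated precisely by the expansion it produces on its partner sphere.

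It remains to treat a pair $x\in S(0,2^i)$, $y\in S(0,2^{-i})$ with $i\neq 0$ (when $i=0$ the two spheres coincide and the previous paragraph already covers it). Here $|x|_2=2^i\neq 2^{-i}=|y|_2$, so the ultrametric inequality gives $|x-y|_2=2^{|i|}$. Since $\phi^2$ preserves each sphere, $\phi^2(x)\in S(0,2^i)$ and $\phi^2(y)\in S(0,2^{-i})$, and the same reasoning yields $|\phi^2(x)-\phi^2(y)|_2=2^{|i|}=|x-y|_2$. Thus $\phi^2$ is an isometry, hence $1$-Lipschitz, on all of $S(0,2^i)\cup S(0,2^{-i})$. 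I expect the only delicate point to be the sign bookkeeping ensuring that the two scaling factors $2^{-2i}$ and $2^{2i}$ multiply to $1$; once the local scaling formula on $S(0,2^j)$ for $|j|<v/2$ is in hand, the ultrametric treatment of the cross pairs is immediate.
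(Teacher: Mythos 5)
Your proof is correct and takes essentially the same route as the paper: both rest on the identity $\phi(x)-\phi(y)=\left(a-\frac{1}{xy}\right)(x-y)$, the ultrametric evaluation $\left|a-\frac{1}{xy}\right|_2=2^{-2j}$ on a sphere $S(0,2^j)$ with $2|j|<v_2(a)$, and the exact cancellation $2^{2i}\cdot 2^{-2i}=1$ when composing the two scalings for $\phi^2$. The only minor divergence is the cross pairs $x\in S(0,2^i)$, $y\in S(0,2^{-i})$: the paper observes that $|xy|_2=1$ makes $\phi$ itself an isometry on such pairs, while you instead use that $\phi^2$ preserves each sphere so the inter-sphere distance $2^{|i|}$ is automatically unchanged — both are valid, and in passing you record the slightly sharper conclusion that $\phi^2$ is an isometry, not merely $1$-Lipschitz.
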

\begin{proof}
If $x\in S(0,2^{i})$ for some $-\lfloor  (v_{2}(a)-1)/2\rfloor\leq  i \leq 0 $, then
by the assumption $|a|_2<1$, we have 
$$|\phi(x)|_2=|ax+1/x|_2=|1/x|_2=2^{-i}.$$
Now let $x\in S(0,2^{i})$ for some $0\leq  i \leq  \lfloor  (v_{2}(a)-1)/2\rfloor$.
When $i \leq  \lfloor  (v_{2}(a)-1)/2\rfloor$,  we have $$|x|_2=2^i \leq 2^{  \lfloor  (v_{2}(a)-1)/2\rfloor}\leq  2^{  (v_{2}(a)-1)/2}<2^{v_{2}(a)/2},$$  which implies $$|ax|_{2}<|1/x|_2.$$
Hence, the first assertion of the lemma holds.
%$$|\phi(x)|_2=|ax+1/x|_2=2^{-i}.$$

Let us show that  $\phi^{2}$ is $1$-Lipschitz continuous on   $S(0,2^i)\cup S(0,2^{-i})$ for $ - \lfloor  (v_{2}(a)-1)/2\rfloor \leq i \leq \lfloor  (v_{2}(a)-1)/2\rfloor$. 
Let $x,y \in S(0,2^{i})\cup  S(0,2^{-i})$.  If $x\in S(0,2^{i})$ and  $y \in S(0,2^{-i})$, then $|xy|_2=1$ and 
$$|\phi(x)-\phi(y)|_2=\left|a-\frac{1}{xy}\right|_2|x-y|_2=|x-y|_2.$$
Hence, it suffices to show that  for each $ - \lfloor  (v_{2}(a)-1)/2\rfloor \leq i \leq \lfloor  (v_{2}(a)-1)/2\rfloor$, $$ \forall x,y \in S(0,2^{i}), \quad |\phi^{ 2}(x)-\phi^{ 2}(y)|_2\leq |x-y|_2.$$
By observing that  $\frac{1}{|xy|_2}=2^{-2i}> |a|_2$ and $\frac{1}{|\phi(x)\phi(y)|_2}=2^{2i}>|a|_2$, we have 
\begin{align*}  
|\phi^{ 2}(x)-\phi^{ 2}(y)|_2&=  \left|a-\frac{1}{\phi(x)\phi(y)}\right|_2|\phi(x)-\phi(y)|_2\\
&=\left|a-\frac{1}{\phi(x)\phi(y)}\right|_2 \left|a-\frac{1}{xy}\right|_2|x-y|_2\\
& \leq \left|\frac{1}{\phi(x)\phi(y)}\right|_2 \left|\frac{1}{xy}\right|_2|x-y|_2=|x-y|_2.
\end{align*}

\end{proof}

Now we distinguish two subcases: $\sqrt{-a}\in \Q_2$ and $\sqrt{-a}\notin \Q_2$.
\subsubsection{$\sqrt{-a}\in \Q_2$}

\begin{lemma}\label{gotozero}
If $\sqrt{-a}\in \Q_2$, then  
$$\phi\left( D( {1 \over \sqrt{-a}}, \frac{1}{4\sqrt{|a|_2}})\right)=\phi\left( D( -{1 \over \sqrt{-a}}, \frac{1}{4\sqrt{|a|_2}})\right)=D(0,\frac{\sqrt{|a|_2}}{8}),$$ and  
$$\forall x,y \in  D( \pm{1 \over \sqrt{-a}}, \frac{1}{4\sqrt{|a|_2}}), \quad  |\phi(x)-\phi(y)|_2= \frac{|a|_2 |x-y|_2}{2}.$$
\end{lemma}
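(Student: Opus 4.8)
The plan is to follow the same strategy as in the proof of Lemma \ref{scaling}, with the two zeros $\pm 1/\sqrt{-a}$ of $\phi$ playing the role that the fixed points $\pm 1/\sqrt{1-a}$ played there. First I would record the elementary facts about these centers. Since $\phi(x)=(ax^2+1)/x$, the points $\pm 1/\sqrt{-a}$ are exactly the roots of $ax^2+1=0$, so each of them is mapped to $0$ by $\phi$; moreover $\phi$ is odd, i.e. $\phi(-x)=-\phi(x)$, which lets me treat the two disks symmetrically and reduce everything to one of them. Because $\sqrt{-a}\in\Q_2$, Lemma \ref{solution} forces $v_2(a)$ to be even, so $|\sqrt{-a}|_2=\sqrt{|a|_2}$ and $|1/\sqrt{-a}|_2=1/\sqrt{|a|_2}$; as the radius $\frac{1}{4\sqrt{|a|_2}}$ is strictly smaller than the distance $\frac{1}{2\sqrt{|a|_2}}$ between the two centers, the two disks are disjoint and both lie on the sphere $S(0,1/\sqrt{|a|_2})$.

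The core of the argument is the exact scaling identity, and here I would reuse $\phi(x)-\phi(y)=(a-\tfrac{1}{xy})(x-y)$, so that it suffices to evaluate $|a-\tfrac{1}{xy}|_2$. Working, by oddness and without loss of generality, on $D(1/\sqrt{-a},\frac{1}{4\sqrt{|a|_2}})$, I would pass through the inversion $f(x)=1/x$: one checks that $f$ carries this disk onto $D(\sqrt{-a},\frac{\sqrt{|a|_2}}{4})$, so every $x$ in it can be written $x=1/(\sqrt{-a}+x')$ with $|x'|_2\le\frac{\sqrt{|a|_2}}{4}$, and likewise $y=1/(\sqrt{-a}+y')$. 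Expanding gives $\tfrac{1}{xy}=-a+(x'+y')\sqrt{-a}+x'y'$, hence
\[
a-\frac{1}{xy}=2a-(x'+y')\sqrt{-a}-x'y'.
\]
Now $|2a|_2=\tfrac{1}{2}|a|_2$, while $|(x'+y')\sqrt{-a}|_2\le\frac{|a|_2}{4}$ and $|x'y'|_2\le\frac{|a|_2}{16}$; since $\frac{|a|_2}{2}$ strictly dominates the other two contributions, the ultrametric inequality yields $|a-\tfrac{1}{xy}|_2=\frac{|a|_2}{2}$, and therefore $|\phi(x)-\phi(y)|_2=\frac{|a|_2}{2}|x-y|_2$.

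To obtain the image equality I would use that $\phi$ is analytic and, by the scaling just established, multiplies distances on each disk by the constant factor $\frac{|a|_2}{2}$; such a map sends $D(1/\sqrt{-a},\frac{1}{4\sqrt{|a|_2}})$ onto the disk centered at $\phi(1/\sqrt{-a})=0$ of radius $\frac{|a|_2}{2}\cdot\frac{1}{4\sqrt{|a|_2}}=\frac{\sqrt{|a|_2}}{8}$, that is, onto $D(0,\frac{\sqrt{|a|_2}}{8})$. The disk $D(-1/\sqrt{-a},\frac{1}{4\sqrt{|a|_2}})$ maps onto the same target because $\phi$ is odd and the target is centered at $0$.

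The step I expect to be most delicate is the verification that inversion sends the given disk \emph{exactly} onto $D(\sqrt{-a},\frac{\sqrt{|a|_2}}{4})$ (matching $\frac{1}{4\sqrt{|a|_2}}$ with $\frac{\sqrt{|a|_2}}{4}$) together with the passage from exact constant distortion to surjectivity onto a ball; both are routine $p$-adic facts but should be stated with care. One must also note that the scaling identity holds only when $x$ and $y$ lie in the \emph{same} disk, which is how the notation $D(\pm 1/\sqrt{-a},\cdot)$ is to be read here: for $x,y$ in different disks the analogous computation gives $a-\tfrac{1}{xy}=(x'-y')\sqrt{-a}-x'y'$, so $|a-\tfrac{1}{xy}|_2\le\frac{|a|_2}{4}$ and the factor $\frac{|a|_2}{2}$ no longer appears.
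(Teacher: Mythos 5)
Your proof is correct and follows essentially the same route as the paper's: the substitution $x=1/(\sqrt{-a}+x')$, $y=1/(\sqrt{-a}+y')$ via inversion, the expansion $a-\tfrac{1}{xy}=2a-(x'+y')\sqrt{-a}-x'y'$, and the ultrametric estimate giving $\left|a-\tfrac{1}{xy}\right|_2=\tfrac{|a|_2}{2}$, exactly as in the paper's reduction to the argument of Lemma \ref{scaling}. Your additional remarks---that the scaling constant holds only for $x,y$ in the \emph{same} disk (with the cross-disk computation yielding at most $\tfrac{|a|_2}{4}$) and that exact constant distortion plus $\phi(\pm 1/\sqrt{-a})=0$ gives the image disk $D(0,\tfrac{\sqrt{|a|_2}}{8})$---are correct elaborations of steps the paper leaves implicit.
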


\begin{proof}
Note that  $ \phi(\pm 1/\sqrt{-a})=0$. It suffices to show that 
$$\forall x, y \in D( \pm {1 \over \sqrt{-a}}, \frac{1}{4\sqrt{|a|_2}}),\quad |\phi(x)-\phi(y)|_2=  \frac{|a|_2 |x-y|_2}{2}.$$ 
Without loss of generality, we assume $x,y \in D( {1 \over \sqrt{-a}}, \frac{1}{4\sqrt{|a|_2}})$. By the same arguments in the proof of Lemma \ref{scaling}, there exist $x^{\prime},y^{\prime}\in D(0, \frac{\sqrt{|a|_2}}{4})$
such that 
$$x=\frac{1}{\sqrt{-a}+x^{\prime}},  \quad y=\frac{1}{\sqrt{-a}+y^{\prime}},$$
and
$$\left|a-\frac{1}{xy}\right|_2= \left|2a-(x^{\prime}+y^{\prime})\sqrt{-a}-x^{\prime}y^{\prime}\right|_2.$$
Since $x^{\prime},y^{\prime}\in D(0, \frac{\sqrt{|a|_2}}{4})$, we immediately get 
 $$\left|a-\frac{1}{xy}\right|_2=\frac{|a|_2}{2}.$$
Thus
$$|\phi(x)-\phi(y)|_2=\left|a-\frac{1}{xy}\right|_2 |x-y|_2=\frac{|a|_2 |x-y|_2}{2}.$$
\end{proof}

\begin{proposition}\label{Prop3.9}
Suppose $|a|_2<1$ and $\sqrt{-a}\in\Q_2$.   Let $\Omega=\PP\setminus ( S(0, \sqrt{|a|_2}) \cup X_a) $ and consider the restriction map   $\phi: \Omega \to \PP$.  
Set $\mathcal{K}_{\phi, \Omega}=\bigcap_{n=0}^\infty \phi^{-n}(\Omega)$. We distinguish two cases.\\
{\rm (i)} If $v_{2}(a)\neq 2$, then the subsystem $\left(\mathcal{K}_{\phi, \Omega}, \phi \right)$ is topologically conjugate to a subshift of finite type $(\Sigma_{A},\sigma),$
where the incidence matrix is
$$A=\begin{pmatrix} 0 & 0& 1 & 0   \\ 0&0&1&0\\  0&0&0&1 \\  1&1&0&1 \end{pmatrix}.$$\\
{\rm (ii)} If $v_{2}(a)= 2$, then the subsystem $\left(\mathcal{K}_{\phi, \Omega}, \phi \right)$ is topologically conjugate to a subshift of finite type $(\Sigma_{A},\sigma),$
where the incidence matrix is
$$A=\begin{pmatrix} 0 & 0& 1 & 0 &0  \\ 0&0&1&0&0\\  0&0&0&1&0 \\  0&0&0&1&1 \\ 1&1&0&0&0 \end{pmatrix}.$$
\end{proposition}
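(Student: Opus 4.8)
The plan is to realize $\phi$ on the compact open set $\Omega$ as a \emph{generalized $2$-adic weak repeller} and then invoke Theorem \ref{general}. The first task is to decompose $\Omega$ into finitely many disks on which $\phi$ acts by a constant dilation, as required by (\ref{expandingness}). From the definition $X_a=\{\sqrt{|a|_2}<|x|_2<1/\sqrt{|a|_2}\}$ and the removal of $S(0,\sqrt{|a|_2})$, the set $\Omega$ consists of the two extreme spheres $S(0,2^i)$ with $|x|_2\le\sqrt{|a|_2}$ together with those with $|x|_2\ge 1/\sqrt{|a|_2}$, plus the points $0,\infty$. I would list these building blocks explicitly: the outer region $\{|x|_2\ge 1/\sqrt{|a|_2}\}$, the inner punctured region $\{0<|x|_2\le\sqrt{|a|_2}\}$, and crucially the sphere $S(0,\sqrt{|a|_2})=D(1/\sqrt{-a},\tfrac{1}{4\sqrt{|a|_2}})\cup D(-1/\sqrt{-a},\tfrac{1}{4\sqrt{|a|_2}})$ on which, by Lemma \ref{gotozero}, $\phi$ maps each half-disk onto $D(0,\tfrac{\sqrt{|a|_2}}{8})$ with dilation factor $|a|_2/2$. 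This last map is the one that forces the refinement of the partition and hence the difference between the two matrices.

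Next I would determine the transitions, i.e. compute $\phi(D_i)$ for each block using Lemmas \ref{largesphere} and \ref{gotozero}. Lemma \ref{largesphere} governs the spheres away from $S(0,\sqrt{|a|_2})$: for $i>v_2(a)/2$ one has $\phi(S(0,2^i))=S(0,|a|_2 2^i)$ (radii shrink toward the critical value), while for $i\le 0$ one has $\phi(S(0,2^i))=S(0,2^{-i})$ (the outer region is flipped back in). The key point is to track where each region lands \emph{relative to $\Omega$}: the two half-disks of $S(0,\sqrt{|a|_2})$ both land in $D(0,\tfrac{\sqrt{|a|_2}}{8})$, which sits inside the innermost sphere, so the symbolic dynamics must record that the ``outer'' and ``inner'' blocks each route through the critical sphere before escaping. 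I would choose the four (resp.\ five) disks so that each $\phi(D_i)$ is exactly a union of the chosen disks $D_j$, read off the incidence matrix $A$ by setting $A_{ij}=1$ iff $D_j\subset\phi(D_i)$, and verify the resulting matrix is irreducible so that Theorem \ref{general} applies.

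The dichotomy $v_2(a)\ne 2$ versus $v_2(a)=2$ is where I expect the main obstacle, and it is genuinely a counting subtlety rather than a new idea. When $|a|_2<1/4$ (that is $v_2(a)>2$) there are enough intermediate spheres that the inner and outer escape regions can each be collapsed to a single symbol, giving the $4\times 4$ matrix; when $v_2(a)=2$, i.e. $|a|_2=1/4$, the critical sphere $S(0,\sqrt{|a|_2})=S(0,1/2)$ is adjacent to the integer sphere $S(0,1)$ and to $S(0,2)$, so the image $D(0,\tfrac{\sqrt{|a|_2}}{8})=D(0,\tfrac{1}{16})$ and the first return geometry split one symbol into two, producing the $5\times 5$ matrix with its extra row and column. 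The careful step is therefore to check exactly how many distinct spheres of the form $S(0,2^i)$ with $i\le 0$ or $i>v_2(a)/2$ actually meet $\Omega$ and how the dilation factors from Lemma \ref{largesphere} chain together; I would handle this by writing $v_2(a)=2m$ and tracking the orbit of radii as an integer recursion, treating $m=1$ separately. Once the two partitions and their transition data are pinned down, the conjugacy and the identification of the Julia set with the whole of $\mathcal{K}_{\phi,\Omega}$ follow immediately from Theorem \ref{general}.
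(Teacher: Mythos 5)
Your high-level plan coincides with the paper's: partition into finitely many pieces, realize a generalized $2$-adic weak repeller, apply Theorem \ref{general}, and expect an extra symbol when $v_2(a)=2$. But the concrete geometry underlying your partition is wrong. Since $|a|_2<1$ we have $\sqrt{|a|_2}<1<1/\sqrt{|a|_2}$, and the zeros $\pm 1/\sqrt{-a}$ of $\phi$ satisfy $|{\pm}1/\sqrt{-a}|_2=1/\sqrt{|a|_2}$; hence the two half-disks $D(\pm 1/\sqrt{-a},\tfrac{1}{4\sqrt{|a|_2}})$ of Lemma \ref{gotozero} make up the \emph{outer} boundary sphere $S(0,1/\sqrt{|a|_2})$ of $X_a$, not $S(0,\sqrt{|a|_2})$ as you write --- the latter is precisely the sphere deleted from $\Omega$ and plays no role in the repeller. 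You also read Lemma \ref{largesphere} backwards: $i\le 0$ is the \emph{inner} region being flipped out, $i>v_2(a)/2$ the outer region being pulled in. The paper's pieces are $D_{1},D_{2}$ (halves of $S(0,1/\sqrt{|a|_2})$), $D_3=D(0,\tfrac{\sqrt{|a|_2}}{8})$, $D_4=\PP\setminus D(0,\tfrac{4}{\sqrt{|a|_2}})$, and the $4$-versus-$5$ dichotomy has nothing to do with adjacency of $S(0,1/2)$ to $S(0,1)$: it is whether $\phi(D_4)=\PP\setminus D(0,4\sqrt{|a|_2})$ contains $D_1\cup D_2=S(0,1/\sqrt{|a|_2})$, i.e.\ whether $4\sqrt{|a|_2}<1/\sqrt{|a|_2}$, i.e.\ $v_2(a)>2$; for $v_2(a)=2$ the edges $D_4\to D_1,D_2$ are replaced by the detour $D_4\to D_5=S(0,\tfrac{4}{\sqrt{|a|_2}})\to D_1\cup D_2$, while the self-loop at $D_4$ survives.

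Beyond the mislabeled sphere, two essential steps are missing. First, your program ``decompose $\Omega$ into disks on which $\phi$ acts by a constant dilation, as required by (\ref{expandingness})'' cannot be executed literally: $D_4$ is a neighborhood of $\infty$, not a disk of $\Q_2$, and in the $|\cdot|_2$-metric $\phi$ \emph{contracts} on $D_1,D_2$ (factor $|a|_2/2$) and on $D_4$ (factor $|a|_2$), while on $D_3$ the distortion $|a-\tfrac{1}{xy}|_2=\tfrac{1}{|xy|_2}$ is not constant and $0\mapsto\infty$. Theorem \ref{general} applies to compact open subsets of $\Q_p$; the paper therefore conjugates by the M\"obius map $g(x)=1/(x-1)$, which sends $X=\cup_i D_i$ into $\mathbb{Z}_2$ (possible because $1\in X_a$), after which the pieces are genuine disks with constant scaling and the expansion sits at the repelling fixed point $\infty$ (multiplier $1/a$). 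Some such device is indispensable and absent from your outline. Second, your claim that ``each $\phi(D_i)$ is exactly a union of the chosen disks $D_j$'' is false for $D_4$, whose image also covers intermediate spheres such as $S(0,\tfrac{2}{\sqrt{|a|_2}})$ and $S(0,\tfrac{4}{\sqrt{|a|_2}})$; one must instead define $A_{ij}=1$ iff $D_j\subset\phi(D_i)$ and then prove separately that $\mathcal{K}_{\phi,X}=\mathcal{K}_{\phi,\Omega}$, by tracking the finitely many spheres of $\Omega\setminus X$ into $X_a$ (e.g.\ $S(0,\tfrac{\sqrt{|a|_2}}{2})\to S(0,\tfrac{2}{\sqrt{|a|_2}})\to S(0,2\sqrt{|a|_2})\subset X_a$). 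This is exactly where the ``counting subtlety'' you allude to lives, and it is genuinely delicate at the case split you flag: for $v_2(a)=2$ the sphere $S(0,\tfrac{\sqrt{|a|_2}}{4})$ maps \emph{onto} $D_5$ rather than into $X_a$, so the identification of $\mathcal{K}_{\phi,\Omega}$ with the symbolic core requires real care there; your proposal never carries out this bookkeeping.
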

\begin{proof}
Let $D_1= D( {1 \over \sqrt{-a}}, \frac{1}{4\sqrt{|a|_2}}),$ $ D_2= D( -{1 \over \sqrt{-a}}, \frac{1}{4\sqrt{|a|_2}})$, 
$D_3=D(0,\frac{\sqrt{|a|_2}}{8})$   and $D_4=\P\setminus D(0,\frac{4}{\sqrt{|a|_2}})$. 
By Lemma \ref{gotozero}, the restricted maps $\phi:D_1 \to D_3$ and  $\phi:D_2 \to D_3$  are  both  scaling bijective. 
One can also directly  check  that  $\phi:D_3\to D_4$ and  $\phi: D_4\to \phi(D_4)=\P\setminus D(0,4\sqrt{|a|_2})$ are  
bijective. \\
{\rm (i)} Since $|a|_2<4$, we have $D_1\cup D_2 \subset \phi(D_4)$.  Let  $X=\cup_{i=1}^{4}D_i$. Consider the map $\phi:X\to \PP$. Set  $$\mathcal{K}_{\phi, X}=\bigcap_{n=0}^\infty \phi^{-n}(X).$$ 
Let $g: x\mapsto1/(x-1)$ and $\psi:= g\circ\phi\circ g^{-1}$. 
We  study the map $\psi:  g(X)\to \P$. Observe $g(X)\subset \mathbb{Z}_p$,
%Take $$\mathcal{J^{\prime}}=\bigcap_{i=0}^\infty(\psi^{-1})^{\circ i}(X).$$ 
and that $(\mathcal{K}_{\psi, g(X)}, \psi)$ is a generalized weak repeller with incidence matrix
 $$A=\begin{pmatrix} 0 & 0& 1 & 0   \\ 0&0&1&0\\  0&0&0&1 \\  1&1&0&1 \end{pmatrix}.$$ Thus, by Theorem \ref{general},  
$(\mathcal{K}_{\psi, g(X)}, \psi)$ is topologically conjugate to the subshift of finite type $(\Sigma_{A},\sigma)$. Take $\mathcal{K}_{\phi, X}=g^{-1}(\mathcal{K}_{\psi, g(X)})$.
Since $\psi= g\circ\phi\circ g^{-1}$, we deduce that
$(\mathcal{K}_{\phi, X}, \phi)$ is topologically conjugate to $(\Sigma_{A},\sigma).$ 

Noting that  $$\Omega\setminus X:=S(0,\frac{\sqrt{|a|_2}}{2})\cup S(0,\frac{\sqrt{|a|_2}}{4})\cup S(0,\frac{2}{\sqrt{|a|_2}})\cup S(0,\frac{4}{\sqrt{|a|_2}}),$$ by Lemmas  \ref{largesphere} and \ref{6}, we have $\phi(\Omega\setminus X) \subset X_a$, since $v_2(a)>2$. Thus we have $\mathcal{K}_{\phi, X}=\mathcal{K}_{\phi, \Omega}$.

\smallskip
\noindent {\rm (ii)} Let $D_5= S( 0, {4 \over \sqrt{|a|_2}} )$.    Since $|a|_2=4$, we have $D_5 \subset \phi(D_4)$.  By  Lemma \ref{largesphere},  the map $\phi:D_5\to D_1\cap D_2$ satisfies 
$$\forall x,y \in D_5, \quad |\phi(x)-\phi(y)|_2 =|a|_2|x-y|_2. $$   Let  $X=\cup_{i=1}^{5}D_i$ and consider the map $\phi:X\to \PP$.  By the same argument   as the case (i),  we have   $\mathcal{K}_{\phi, X}=\mathcal{K}_{\phi, \Omega}$ and $(\mathcal{K}_{\phi, X},\phi)$ is topologically conjugate to a subshift of finite type $(\Sigma_{A},\sigma),$
with incidence matrix 
$$A=\begin{pmatrix} 0 & 0& 1 & 0 &0  \\ 0&0&1&0&0\\  0&0&0&1&0 \\  0&0&0&1&1 \\ 1&1&0&0&0 \end{pmatrix}.$$
\end{proof}

\subsubsection{$\sqrt{-a}\notin \Q_2$}

\begin{lemma}\label{3.9}
If  $v_2(a)$ is odd, then for each $x\in \Q_2\setminus \{0\}$, there exists a  positive integer $N$ such that 
$$\phi^{ n}(x)\in S(0,2^i)\cup S(0,2^{-i}), \quad \forall n\geq N,$$   for some $0\leq i\leq  \frac{v_2(a)-1}{2}$.
\end{lemma}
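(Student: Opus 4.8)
The plan is to reduce the statement to the dynamics of a single piecewise map on $\mathbb{Z}$ obtained by tracking $2$-adic absolute values. Write $\alpha := v_2(a)$, which is odd by hypothesis, and set $m := (\alpha-1)/2 = \lfloor (v_2(a)-1)/2\rfloor$, so $\alpha = 2m+1$. First I would observe that, precisely because $\alpha$ is odd, no integer exponent $i$ can make the two terms balance on a sphere: $|ax|_2 = |1/x|_2$ on $S(0,2^i)$ would force $v_2(x) = -\alpha/2\notin\mathbb{Z}$. In particular $\phi(x)\neq 0$ for every $x\neq 0$, so the whole forward orbit $\{\phi^n(x)\}$ avoids $0$ and $\infty$, and each iterate lies on a genuine sphere $S(0,2^{e_n})$ with $e_n\in\mathbb{Z}$ defined by $|\phi^n(x)|_2 = 2^{e_n}$. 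Combining the two mapping formulas of Lemma \ref{largesphere} with the inclusion of Lemma \ref{6} (which together cover every integer exponent), one checks that on each sphere exactly one of $ax$, $1/x$ dominates, so the exponents obey $e_{n+1}=S(e_n)$, where
\[
S(i)=\begin{cases} -i & \text{if } i\leq m,\\ i-(2m+1) & \text{if } i\geq m+1.\end{cases}
\]

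Next I would analyze the integer system $(\mathbb{Z},S)$. The central band $B:=\{-m,-m+1,\dots,m\}$ is invariant: for $i\in B$ one has $S(i)=-i\in B$, so once the exponent enters $B$ it remains there and merely alternates between a value $c$ and $-c$. It therefore suffices to prove that every $S$-orbit enters $B$ in finitely many steps. The key point is a no-overshoot estimate, controlled by the coincidence that the affine step $2m+1$ equals the cardinality of $B$: if $i\geq m+1$, then $S(i)=i-(2m+1)$, so either $m+1\leq i\leq 3m+1$, whence $S(i)\in B$ directly, or $i\geq 3m+2$, whence $S(i)\geq m+1$ but $|S(i)|=i-(2m+1)<i$. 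If instead $i\leq -(m+1)$, then $S(i)=-i\geq m+1$ with $|S(i)|=|i|$, which reduces to the previous case at the next step.

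Consequently $|e_n|$ is a non-negative integer that, as long as it exceeds $m$, strictly decreases within at most two applications of $S$; hence the orbit reaches $B$ after a finite number $N$ of steps. Taking $c:=e_N\in B$ and $i:=|c|\in\{0,1,\dots,m\}$, the alternation $c\leftrightarrow -c$ forces $|\phi^n(x)|_2\in\{2^{i},2^{-i}\}$, that is $\phi^n(x)\in S(0,2^{i})\cup S(0,2^{-i})$ for all $n\geq N$, which is the assertion with $0\leq i\leq (v_2(a)-1)/2$. The only real obstacle is the bookkeeping in this entry argument, namely verifying that the orbit lands inside the band without ever jumping past its lower endpoint; but this is exactly guaranteed by the match between the step length $2m+1$ and the size of $B$, so beyond the careful case analysis no genuine difficulty arises.
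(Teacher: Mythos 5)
Your proposal is correct and takes essentially the same route as the paper's proof: since $v_2(a)$ is odd, exactly one of the terms $ax$, $1/x$ dominates on every sphere, the modulus strictly decreases by the factor $|a|_2=2^{-(2m+1)}$ outside the central band $2^{-m}\leq |x|_2\leq 2^{m}$, and once inside the band Lemma \ref{6} (your rule $S(i)=-i$) makes the orbit alternate between $S(0,2^i)$ and $S(0,2^{-i})$. The only difference is presentational: your exponent map $S$ makes explicit the no-overshoot step (first entry from $e\geq m+1$ lands at $e-(2m+1)\geq -m$) that the paper's proof asserts without comment when it claims the orbit reaches $2^{-(v_2(a)-1)/2}\leq |\phi^N(x)|_2\leq 2^{(v_2(a)-1)/2}$.
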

\begin{proof}
Note that $|\phi(x)|_2=|{1}/{x}|_2$, if $|x|_2<  2^{- (v_2(a)-1)/2}$. Thus we have
$$\phi(D(0,2^{-( v_2(a)-1)/2-1}))=\mathbb{P}^{1}(\Q_2)\setminus D(0,2^{  (v_2(a)-1)/2}).$$
By Lemma \ref{6}, it suffices to show that the statement holds for the points in $\mathbb{P}^{1}(\Q_2)\setminus  D(0,2^{ (v_2(a)-1)/2})$.
 In fact, if $x\in  \mathbb{P}^{1}(\Q_2)\setminus D(0,2^{ (v_2(a)-1)/2}) $,  one can check that 
 $$|\phi(x)|_2=|a|_2|x|_2\leq |x|_2.$$
 So there exists an integer $N$ such that $$2^{- ( v_2(a)-1)/2}\leq |\phi^{ N}(x)|_2\leq 2^{  (v_2(a)-1)/2}.$$  By  Lemma \ref{6}, we conclude.
\end{proof}

\begin{lemma}\label{3.10}
If   $v_2(a)>0$ is even  and  $\sqrt{-a}\notin \Q_2(\sqrt{-3})$, then $$\phi( S(0, \frac{1}{\sqrt{|a|_2}})\subset  S(0, \frac{\sqrt{|a|_2}}{2}).$$
\end{lemma}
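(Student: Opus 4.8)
The plan is to reduce the whole claim to a single congruence computation modulo $8$ on the unit part of $a$. Write $v_2(a)=2m$ with $m\geq 1$ and factor $a=2^{2m}b$, where $b=2^{-v_2(a)}a$ is a $2$-adic unit; then $\sqrt{|a|_2}=2^{-m}$, so $S(0,1/\sqrt{|a|_2})=S(0,2^m)$ and $S(0,\sqrt{|a|_2}/2)=S(0,2^{-m-1})$. For $x$ with $|x|_2=2^m$ I would write $x=2^{-m}c$ with $c$ a $2$-adic unit and use
$$\phi(x)=\frac{ax^2+1}{x},\qquad |\phi(x)|_2=\frac{|ax^2+1|_2}{2^m}.$$
Since $ax^2=bc^2$ and every odd square satisfies $c^2\equiv 1\pmod 8$, I obtain $ax^2\equiv b\pmod 8$; in particular $ax^2$ is a unit whose class modulo $4$ is that of $b$. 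Hence $|\phi(x)|_2=2^{-m-1}=\sqrt{|a|_2}/2$ holds for every such $x$ exactly when $|ax^2+1|_2=1/2$, i.e. when $ax^2\equiv 1\pmod 4$, i.e. when $b\equiv 1\pmod 4$.

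The heart of the argument is therefore to show that the hypothesis $\sqrt{-a}\notin\Q_2(\sqrt{-3})$ forces $b\equiv 1\pmod 4$. Because $2^{2m}$ is a square, Lemma \ref{7extension}(i) gives $\Q_2(\sqrt{-a})=\Q_2(\sqrt{-b})$, so the field generated by $\sqrt{-a}$ is governed by the residue of $-b$ modulo $8$. Using Lemma \ref{solution} (a $2$-adic number of even valuation is a square iff its unit part is $\equiv 1\pmod 8$) together with Lemma \ref{7extension}, I would set up the dictionary between residues and fields: $-b\equiv 1\pmod 8$ (i.e. $b\equiv 7$) gives $\sqrt{-a}\in\Q_2$, while $-b\equiv 5\pmod 8$ (i.e. $b\equiv 3$) gives $\Q_2(\sqrt{-a})=\Q_2(\sqrt{-3})$, since $-3\equiv 5\pmod 8$ and Lemma \ref{7extension}(i) identifies two quadratic fields precisely when the ratio of the radicands is a square. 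Consequently $\sqrt{-a}\notin\Q_2(\sqrt{-3})$ (which, as $\Q_2\subset\Q_2(\sqrt{-3})$, already subsumes $\sqrt{-a}\notin\Q_2$) rules out $b\equiv 3$ and $b\equiv 7\pmod 8$, leaving only $b\equiv 1$ or $b\equiv 5\pmod 8$; in both cases $b\equiv 1\pmod 4$.

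Combining the two steps, $b\equiv 1\pmod 4$ yields $ax^2\equiv 1\pmod 4$, hence $|ax^2+1|_2=1/2$ and $|\phi(x)|_2=2^{-m-1}=\sqrt{|a|_2}/2$ for every $x\in S(0,1/\sqrt{|a|_2})$, which is exactly $\phi(S(0,1/\sqrt{|a|_2}))\subset S(0,\sqrt{|a|_2}/2)$. The one delicate point I anticipate is the bookkeeping in the middle paragraph: one must carry the factor $2^{2m}$ through the square-class computation and keep straight which residue class modulo $8$ corresponds to each of $\Q_2$, $\Q_2(\sqrt{-1})$, $\Q_2(\sqrt{-3})$ and $\Q_2(\sqrt{3})$. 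Once that correspondence is pinned down correctly via Lemmas \ref{solution} and \ref{7extension}, everything else is an immediate valuation count.
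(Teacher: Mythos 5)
Your proof is correct and follows essentially the same route as the paper: both reduce the claim to showing that the unit part $b=2^{-v_2(a)}a$ satisfies $b\equiv 1$ or $5 \pmod 8$ via Lemmas \ref{solution} and \ref{7extension}, and then compute $|\phi(x)|_2=|ax^2+1|_2/|x|_2=\frac{1}{2}\sqrt{|a|_2}$ directly. The only (cosmetic) difference is that you parametrize the sphere multiplicatively as $x=2^{-m}c$ with $c^2\equiv 1\pmod 8$, whereas the paper writes $x=2^{-v_2(a)/2}+y$ and expands the square; your version also spells out the residue-class dictionary that the paper merely asserts, which is a welcome clarification.
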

\begin{proof}
 For $ x\in S(0, \frac{1}{\sqrt{|a|_2}})$, we have $x=2^{-\frac{v_2(a)}{2}}+y$ for some $y\in D(0,\frac{1}{2\sqrt{|a|_2}})$. 
 Noting that $v_2(a)$ is even, we deduce from $\sqrt{-a}\notin \Q_2(\sqrt{-3})$ that
$$2^{v_2(a)}a \equiv1 \hbox{ or }-3 {\rm  ~(\!\!\!\!\!\mod 8)}.$$  Thus $$|\phi(x)|_2=\frac{|ax^2+1|_2}{|x|_2}= \frac{|2^{-v_2(a)}a+1+2\cdot a \cdot 2^{-v_2(a)/2}y+a\cdot y^2|_2 }{|x|_2}=\frac{1}{2|x|_2}.$$
Therefore,  $$\phi( S(0, \frac{1}{\sqrt{|a|_2}})\subset  S(0, \frac{\sqrt{|a|_2}}{2}).$$
\end{proof}

\begin{lemma}\label{3.11}
If  $\sqrt{-a}\in \Q_2(\sqrt{-3})$, then  
$$\phi( D(2^{-\frac{v_2(a)}{2}}, \frac{1}{4\sqrt{|a|_2}}))=\phi(  D(-2^{-\frac{v_2(a)}{2}}, \frac{1}{4\sqrt{|a|_2}}))=S(0,\frac{\sqrt{|a|_2}}{4}),$$ and  
$$\forall x,y \in  D(\pm \frac{1}{\sqrt{|a|_2}}, \frac{1}{4\sqrt{|a|_2}}), \quad |\phi(x)-\phi(y)|_2= \frac{|a|_2 |x-y|_2}{2}.$$
\end{lemma}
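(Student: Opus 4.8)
The plan is to follow the template already used in Lemmas \ref{scaling} and \ref{gotozero}: first extract the arithmetic content hidden in the hypothesis $\sqrt{-a}\in\Q_2(\sqrt{-3})$, then run two ultrametric estimates (one for the scaling factor $|a-\frac{1}{xy}|_2$, one for $|\phi(x)|_2$), and finally promote an inclusion of images to an equality by matching radii.

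First I would fix notation. Since $|a|_2<1$ we have $v_2(a)>0$, and the presence of the center $2^{-v_2(a)/2}$ already signals that $v_2(a)$ is even; indeed, writing $\Q_2(\sqrt{-a})=\Q_2(\sqrt{-3})$ (we are in the subsection $\sqrt{-a}\notin\Q_2$), Lemma \ref{7extension}(i) says $a/3$ must be a square, whence by Lemma \ref{solution} $v_2(a)$ is even. So set $v_2(a)=2m$ with $m>0$, let $c:=2^{-v_2(a)/2}=2^{-m}$ (note $|c|_2=2^m=1/\sqrt{|a|_2}$, so both disks lie inside $S(0,1/\sqrt{|a|_2})$), and let $u:=2^{-v_2(a)}a$ be the unit part, so $a=2^{2m}u$ and $|u|_2=1$. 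The single arithmetic input I need is that $a/3$ being a square forces $u\equiv 3\pmod 8$; from this I read off the two facts that drive the whole computation, namely $|u+1|_2=2^{-2}$ and $|u-1|_2=2^{-1}$.

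For the scaling identity I would write $x=c+x'$, $y=c+y'$ with $|x'|_2,|y'|_2\le \frac{1}{4\sqrt{|a|_2}}=2^{m-2}$ and use $\phi(x)-\phi(y)=(a-\frac{1}{xy})(x-y)$. Because $|xy|_2=2^{2m}=1/|a|_2$, it suffices to control $|axy-1|_2$. Expanding $axy-1=(u-1)+ac(x'+y')+ax'y'$, the three terms have absolute values $2^{-1}$, at most $2^{-2}$, and at most $2^{-4}$ respectively, so the first term strictly dominates and the strict ultrametric (isosceles) principle gives $|axy-1|_2=2^{-1}$; hence $|a-\frac{1}{xy}|_2=\frac{|a|_2}{2}$ and $|\phi(x)-\phi(y)|_2=\frac{|a|_2}{2}|x-y|_2$ on either disk. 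For the image I would analogously compute, for $x=c+y$ in the disk, $|\phi(x)|_2=\frac{|ax^2+1|_2}{|x|_2}$ with $ax^2+1=(u+1)+2a\,2^{-m}y+ay^2$; the three terms have absolute values $2^{-2}$, at most $2^{-3}$, and at most $2^{-4}$, so $|ax^2+1|_2=2^{-2}$ and $|\phi(x)|_2=2^{-m-2}=\frac{\sqrt{|a|_2}}{4}$, giving $\phi(D(c,\cdot))\subset S(0,\frac{\sqrt{|a|_2}}{4})$. To turn this into equality I would note that the sphere $S(0,\frac{\sqrt{|a|_2}}{4})=S(0,2^{-m-2})$ is in fact a single ball of radius $2^{-m-3}$ (the residue field being $\F_2$), while the scaling step shows $\phi$ carries the ball $D(c,2^{m-2})$ bijectively onto a ball of radius $\frac{|a|_2}{2}\cdot 2^{m-2}=2^{-m-3}$; a ball contained in a ball of equal radius coincides with it. The center $-c$ is handled either by the same computation or instantly from the oddness $\phi(-x)=-\phi(x)$ and the symmetry of $S(0,\cdot)$ about the origin.

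I expect the only genuinely delicate point to be the arithmetic reduction $u\equiv 3\pmod 8$ (extracting the exact residue class from $\sqrt{-a}\in\Q_2(\sqrt{-3})$ via Lemmas \ref{solution} and \ref{7extension}) together with the bookkeeping that guarantees a strictly dominant term in each of the two estimates, so that there is really no cancellation. Everything else is the same scaling argument already performed in Lemmas \ref{scaling} and \ref{gotozero}.
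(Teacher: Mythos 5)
Your proposal is correct and follows essentially the same route as the paper's proof: the arithmetic reduction of the hypothesis $\sqrt{-a}\in\Q_2(\sqrt{-3})$ to $2^{-v_2(a)}a\equiv 3 \ ({\rm mod}\ 8)$ via Lemmas \ref{solution} and \ref{7extension}, followed by the dominant-term ultrametric estimate yielding $\left|a-\frac{1}{xy}\right|_2=\frac{|a|_2}{2}$ and hence the scaling identity. The only deviations are cosmetic or in your favor: you parameterize the disks additively ($x=c+x'$) rather than through the inversion $x=1/(2^{v_2(a)/2}+x')$ borrowed from Lemma \ref{scaling}, and you spell out the image equality $\phi(D)=S(0,\frac{\sqrt{|a|_2}}{4})$ (the sphere being a single ball of radius $\frac{\sqrt{|a|_2}}{8}$ over $\F_2$, matched by the scaled radius $\frac{|a|_2}{2}\cdot\frac{1}{4\sqrt{|a|_2}}$), a step the paper leaves implicit.
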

\begin{proof}
Without loss of generality, we assume $x,y \in   D(2^{-\frac{v_2(a)}{2}}, \frac{1}{4\sqrt{|a|_2}})$. By the same arguments in the proof of Lemma \ref{scaling}, there exist $x^{\prime},y^{\prime}\in D(0, \frac{\sqrt{|a|_2}}{4})$
such that 
$$x=\frac{1}{2^{-\frac{v_2(a)}{2}}+x^{\prime}},  \quad y=\frac{1}{2^{-\frac{v_2(a)}{2}}+y^{\prime}},$$
and
$$\left|a-\frac{1}{xy}\right|_2= \left|a-2^{-v_2(a)}-(x^{\prime}+y^{\prime})2^{-\frac{v_2(a)}{2}}-x^{\prime}y^{\prime}\right|_2.$$
Note that  $\sqrt{-a}\in \Q_2(\sqrt{-3})$ implies $2^{v_2(a)} a\equiv 3~{\rm   (\!\!\!\!\!\mod 8)}$.
Since $x^{\prime},y^{\prime}\in  D(0, \frac{\sqrt{|a|_2}}{4})$, we immediately get 
 $$\left|a-\frac{1}{xy}\right|_2=\frac{|a|_2}{2}.$$
Hence
$$|\phi(x)-\phi(y)|_2=\left|a-\frac{1}{xy}\right|_2 |x-y|_2=\frac{|a|_2 |x-y|_2}{2}.$$

\end{proof}

\begin{lemma}\label{3.12}
Assume that  $v_2(a)$ is even. If $v_2(a)\neq 2$ or $\Q_2(\sqrt{-a})\neq \Q_2(\sqrt{-3})$,
then for each $x\in \Q_2\setminus \{0\}$, there exists a  positive integer $N$ such that 
$$ \forall n\geq N, \quad \phi^{ n}(x)\in S(0,2^i)\cup S(0,2^{-i}),$$   for some $0\leq i\leq  \frac{v_2(a)}{2}-1$.
\end{lemma}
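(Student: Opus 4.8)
\emph{Proposal.} The plan is to imitate the proof of Lemma \ref{3.9}, tracking only the $2$-adic absolute value of the orbit and thereby reducing the whole problem to a combinatorial dynamics on the radii. Write $m:=v_2(a)/2\ge 1$. By Lemma \ref{6} the \emph{core} set $C:=\bigcup_{|i|\le m-1}S(0,2^{i})$ is exactly the target region, and it is forward invariant: once some iterate lands on a sphere $S(0,2^{j})$ with $|j|\le m-1$, Lemma \ref{6} gives $\phi(S(0,2^{j}))\subset S(0,2^{-j})$ with $|-j|\le m-1$, so the orbit thereafter oscillates between $S(0,2^{j})$ and $S(0,2^{-j})$ forever. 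This already yields the conclusion with the \emph{fixed} index $i=|j|\in\{0,\dots,m-1\}$. Hence the entire content of the lemma is to prove that every orbit reaches $C$ in finitely many steps.

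First I would record that the orbit never leaves $\Q_2\setminus\{0\}$: since $\phi(y)=0$ forces $y=\pm 1/\sqrt{-a}\notin\Q_2$, and $\phi(y)=\infty$ forces $y=0$, no iterate of a nonzero point can hit $0$ or $\infty$ (this is where $\sqrt{-a}\notin\Q_2$ is used). Consequently each $\phi^{n}(x)$ lies on a well-defined sphere $S(0,2^{i_n})$, and I may follow the integer sequence $i_n:=-v_2(\phi^{n}(x))$. Lemmas \ref{largesphere} and \ref{6} determine $i_{n+1}$ from $i_n$ on every sphere \emph{except} the critical one $i_n=m=v_2(a)/2$: for $i>m$ one has the contraction $i\mapsto i-2m$ (Lemma \ref{largesphere}, first case), for $i\le 0$ the reflection $i\mapsto -i$ (Lemma \ref{largesphere}, second case), and for $0<i\le m-1$ again $i\mapsto -i$ (Lemma \ref{6}). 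Iterating the contraction, any $i_0>m$ decreases by $2m$ per step until it first lands in $\{-(m-1),\dots,m\}$, i.e.\ in $C$ or on the critical sphere; and the reflection sends any $i\le -m$ to $-i\ge m$, after which the contraction applies. So the only point that must still be understood is the behaviour once an orbit reaches the critical sphere $S(0,2^{m})=S(0,1/\sqrt{|a|_2})$.

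The hard part will be exactly this critical sphere, and it is where the hypotheses enter, handled by the split $\sqrt{-a}\in\Q_2(\sqrt{-3})$ or not. If $\sqrt{-a}\notin\Q_2(\sqrt{-3})$, Lemma \ref{3.10} gives $\phi(S(0,2^{m}))\subset S(0,2^{-m-1})$, so the radial index follows $m\mapsto -m-1\mapsto m+1\mapsto 1-m$, and $1-m=-(m-1)\in C$: the orbit enters the core after three steps (valid for every $m\ge1$). If $\sqrt{-a}\in\Q_2(\sqrt{-3})$, Lemma \ref{3.11} gives $\phi(S(0,2^{m}))\subset S(0,2^{-m-2})$ (noting $S(0,2^m)=D(2^{-m},2^{m-2})\sqcup D(-2^{-m},2^{m-2})$), so $m\mapsto -m-2\mapsto m+2\mapsto 2-m$, and $2-m\in C$ precisely when $m\ge 2$, i.e.\ when $v_2(a)\neq 2$. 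In both admissible situations the critical sphere is visited at most once and its short excursion terminates inside the invariant core $C$; combined with the previous paragraph this shows every orbit eventually stays in $C$, producing the required $N$ and $i$. Finally I would remark that in the unique excluded configuration $v_2(a)=2$ and $\Q_2(\sqrt{-a})=\Q_2(\sqrt{-3})$ the excursion reads $m=1\mapsto -3\mapsto 3\mapsto 1=m$, a genuine $3$-cycle of spheres that never meets $C=S(0,1)$, which explains both why this case fails and why it is treated separately in Theorem \ref{a<1}(ii).
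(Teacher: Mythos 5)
Your proposal is correct and follows essentially the same route as the paper's proof: both track only the radius $|\phi^n(x)|_2$, using Lemma \ref{largesphere} for the contraction ($i\mapsto i-2m$) on large spheres and the reflection ($i\mapsto -i$) on small ones, Lemma \ref{6} for the forward-invariant core of spheres with $|i|\le m-1$, and Lemmas \ref{3.10} and \ref{3.11} for the three-step excursion off the critical sphere $S(0,1/\sqrt{|a|_2})$, with the hypothesis $v_2(a)\neq 2$ entering exactly where you place it. Your only departure is organizational---splitting the critical-sphere analysis by whether $\sqrt{-a}\in\Q_2(\sqrt{-3})$ rather than by the value of $v_2(a)$---which is the same mathematics and, if anything, a cleaner case division than the paper's.
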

\begin{proof}
%We remind that by the assumptions that $|a|_2<1$, $v_2(a)$ is even, $v_2(a)\neq 2$, we have $|a|_2 \leq 1/16$.

If $|x|_2 \leq   \sqrt{|a|_2}$, then $$|\phi(x)|_2=|{1/x}|_2 \geq {1 \over \sqrt{|a|_2}}.$$ Thus we have
$$\phi(D(0,\sqrt{|a|_2}))\subset \mathbb{P}^{1}(\Q_2)\setminus D(0,\frac{1}{2\sqrt{|a|_2}}).$$
By  Lemma \ref{6}, it suffices to show that the statement holds for all points   in $\mathbb{P}^{1}(\Q_2)\setminus D(0,\frac{1}{2\sqrt{|a|_2}})$.
 In fact, if $x\in  \mathbb{P}^{1}(\Q_2)\setminus D(0,\frac{1}{\sqrt{|a|_2}}) $, by noting $|a|_2 \leq 1/16$, one can check that 
 $$|\phi(x)|_2=|a|_2|x|_2<|x|_2.$$
 Hence there exists an integer $N$ such that $$2^{-v_2(a)/2}< |\phi^{ N}(x)|_2\leq 2^{  v_2(a)/2}.$$ Therefore, it  remains to show that the statement holds for all $x \in S( 0,\frac{1}{\sqrt{|a|_2}})$.  We distinguish two cases.\\
Case (1)  $v_2(a)\neq 2$. 
 By Lemma \ref{3.11}, we have $$\phi( S(0, \frac{1}{\sqrt{|a|_2}})\subset  S(0, \frac{\sqrt{|a|_2}}{4}).$$ 
 Observing $\phi(S(0, \frac{\sqrt{|a|_2}}{4}))\subset   S(0, \frac{4}{\sqrt{|a|_2}})$  and  $\phi(S(0, \frac{4}{\sqrt{|a|_2}}))\subset S(0, 4\sqrt{|a|_2}) $, we obtain
  $$\phi^{3}(S(0, \frac{1}{\sqrt{|a|_2}}) \subset  S(0, 4\sqrt{|a|_2}).$$ 
  Since $v_2(a)\neq 2$ , we have   $$ -\frac{v_2(a)}{2}-1\leq -\frac{v_2(a)}{2} +2= 4\sqrt{|a|_2}\leq  \frac{v_2(a)}{2}-1.$$ 
  Hence, we conclude by Lemma \ref{6} again.

\smallskip
\noindent Case (2)  $\sqrt{-a} \in \Q_2(\sqrt{-1})$ or $\Q_2(\sqrt{3})$.
  By Lemma \ref{3.10},  we have $$\phi( S(0, \frac{1}{\sqrt{|a|_2}})\subset  S(0, \frac{\sqrt{|a|_2}}{2}).$$ Since $\phi(S(0, \frac{\sqrt{|a|_2}}{2}))\subset   S(0,\frac{2}{\sqrt{|a|_2}})$  and  $\phi(S(0, \frac{2}{\sqrt{|a|_2}}))\subset S(0, 2^{ -\frac{v_2(a)}{2} +1}) $, we have
  $$\phi^{3}(S(0, \frac{1}{\sqrt{|a|_2}}) \subset  S(0, 2^{ -\frac{v_2(a)}{2} +1}).$$ 
  Applying Lemma \ref{6}, we finish the proof.
\end{proof}

\begin{proposition}\label{-3}
Assume that    $|a|_2<1$ and  $ \sqrt{-a}  \notin \Q_2$. 
 If  $\sqrt{-a} \in \Q_2(\sqrt{-3})$ and $v_2(a)=2$,   then the subsystem $(S(0,2^{3})\cup S(0,2^{-3} ) \cup S(0,2), \phi)$  is topologically conjugate to a subshift of finite type $(\Sigma_{A},\sigma),$
with incidence matrix 
$$A=\begin{pmatrix} 0 & 0& 1 & 0   \\ 0&0&1&0\\  0&0&0&1 \\  1&1&0&0 \end{pmatrix}.$$ 
%and  for each $x\in \Q_2\setminus (\{0\} \cup \mathcal{J}_\phi)$, there exists an integer  $N$ such that 
%$$\phi^{ n}(x)\in S(0,2^i)\cup S(0,2^{-i}), \quad \forall n\geq N,$$   for some $0\leq i\leq  \frac{v_2(a)}{2}-1$.
% \\
%{\rm (2)} Otherwise, $\mathcal{J}_{\phi}=\{0,\infty\}.$  Moreover, for each  $x\in \Q_2\setminus \{0\} $, there exists an integer $N$ such that 
%$$\phi^{ n}(x)\in S(0,2^i)\cup S(0,2^{-i}), \quad \forall n\geq N,$$   for some $0\leq i\leq   \lfloor  (v_{2}(a)-1)/2 \rfloor$.
\end{proposition}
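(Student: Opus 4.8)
The plan is to realize $(S(0,2^3)\cup S(0,2^{-3})\cup S(0,2),\phi)$ as a transitive generalized $2$-adic weak repeller and to apply Theorem \ref{general} directly. Since $v_2(a)=2$ we have $|a|_2=1/4$ and $\sqrt{|a|_2}=1/2$, so $1/\sqrt{|a|_2}=2$ and the three spheres in the statement are $S(0,2^3)$, $S(0,2)=S(0,1/\sqrt{|a|_2})$ and $S(0,2^{-3})$. First I would record the disk decomposition: each sphere $S(0,2^k)$ is a single disk $D(2^{-k},2^{k-1})$, and moreover $S(0,2)=D(2^{-1},2^{-1})\sqcup D(-2^{-1},2^{-1})$. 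Accordingly I set $D_1=D(2^{-1},2^{-1})$, $D_2=D(-2^{-1},2^{-1})$, $D_3=S(0,2^{-3})$ and $D_4=S(0,2^3)$, so that $X:=D_1\cup D_2\cup D_3\cup D_4=S(0,2^3)\cup S(0,2^{-3})\cup S(0,2)$.

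Next I would compute the image and the scaling constant on each $D_i$ from the two lemmas already available. By Lemma \ref{largesphere} (case $i=3>v_2(a)/2=1$), $\phi$ scales $S(0,2^3)$ by $|a|_2=2^{-2}$ and $\phi(D_4)=S(0,|a|_2 2^3)=S(0,2)=D_1\cup D_2$; by the same lemma (case $i=-3\le 0$), $\phi$ scales $S(0,2^{-3})$ by $2^{6}$ and $\phi(D_3)=S(0,2^3)=D_4$. For the middle sphere, which falls in the exceptional range $i=1=v_2(a)/2$ not covered by Lemma \ref{largesphere}, I would invoke Lemma \ref{3.11}: using $\sqrt{-a}\in\Q_2(\sqrt{-3})$ it gives $\phi(D_1)=\phi(D_2)=S(0,\sqrt{|a|_2}/4)=S(0,2^{-3})=D_3$, with scaling constant $|a|_2/2=2^{-3}$ on each half. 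Collecting these, $\phi(D_1)=\phi(D_2)=D_3$, $\phi(D_3)=D_4$ and $\phi(D_4)=D_1\cup D_2$; in particular $\phi(X)=X$, so that $\mathcal{K}_{\phi,X}=\bigcap_{n\ge0}\phi^{-n}(X)=X$ and the claimed subsystem is exactly $(\mathcal{K}_{\phi,X},\phi)$.

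With this data I would verify the hypotheses of Theorem \ref{general}. The map $\phi$ is a scaling on each $D_i$ with constant ratio ($\gamma_1=\gamma_2=-3$, $\gamma_3=6$, $\gamma_4=-2$), all four images are exact unions of the chosen disks, condition (i) holds since every $\phi(D_i)$ contains some $D_j$, and condition (ii) holds since $\phi(D_4)=D_1\cup D_2\supsetneq D_1$. The resulting incidence matrix has exactly the nonzero entries $A_{13}=A_{23}=A_{34}=A_{41}=A_{42}=1$, which is the matrix $A$ in the statement, and its associated graph ($1\to3\to4\to1$, $2\to3$, $4\to2$) is strongly connected, so $A$ is irreducible and the repeller is transitive. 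Theorem \ref{general} then yields a topological conjugacy between $(X,\phi)$ and $(\Sigma_A,\sigma)$, as required. Unlike Proposition \ref{Prop3.9}, no M\"obius change of coordinate is needed here because all four disks lie in $\Q_2$, bounded away from $\infty$.

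The main point to get right is the decomposition of $S(0,2)$ into the two half-disks $D_1,D_2$: the map $\phi$ folds $S(0,2)$ two-to-one onto $S(0,2^{-3})$ (each half mapping bijectively), and it is precisely this folding that forces the incidence matrix to be $4\times4$ with two rows pointing to $D_3$, rather than a $3\times3$ matrix on the three spheres. A related subtlety is that $\phi$ is contracting on three of the four disks and expanding only on $D_3$ (by $2^6$); the genuine expansion is visible only after one full cycle, where the product of the scaling ratios is $2^{-3}\cdot 2^{6}\cdot 2^{-2}=2>1$. The generalized weak repeller framework of Theorem \ref{general} is exactly what absorbs this, since it requires only the covering conditions (i) and (ii) on the images and not that each local branch be expanding.
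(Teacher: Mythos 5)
Your proof is correct and follows essentially the same route as the paper: the same four-disk decomposition $D_1=D(2^{-1},2^{-1})$, $D_2=D(-2^{-1},2^{-1})$, $D_3=S(0,2^{-3})$, $D_4=S(0,2^3)$, with Lemma \ref{3.11} handling the folding of $S(0,2)$ onto $S(0,2^{-3})$ and Theorem \ref{general} yielding the conjugacy to $(\Sigma_A,\sigma)$. The only cosmetic differences are that you cite Lemma \ref{largesphere} for the scaling factors on $D_3$ and $D_4$ where the paper computes them directly from $\phi(x)-\phi(y)=(a-\frac{1}{xy})(x-y)$, and that you make explicit the verification of transitivity and of conditions (i)--(ii), which the paper leaves implicit.
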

\begin{proof}
%We remind that by the assumptions that $|a|_2<1$, $v_2(a)$ is even, $v_2(a)= 2$, we have $|a|_2 =1/4$.
Let $D_1= D( {1 \over 2}, {1 \over 2})$,  $ D_2= D(  {-1 \over 2},{1 \over 2})$, 
$D_3=D(8,\frac{1}{16})$   and $D_4=D(\frac{1}{8},4)$. 
By Lemma \ref{3.11}, the restricted maps $\phi:D_1 \to D_3$ and  $\phi:D_2 \to D_3$  are both bijective. 
Since $\phi(x)-\phi(y)=(a-\frac{1}{xy})(x-y)$, we have $$|\phi(x)-\phi(y)|_2= 64 |x-y|_2 \ \hbox{ if } x,y \in D_3$$
and $$   |\phi(x)-\phi(y)|_2=\frac{|x-y|_2}{4} \ \hbox{ if }x,y \in D_4.$$
So   $\phi:D_3\to D_4$ and  $\phi: D_4\to \phi(D_4)=D_1\cup D_2$ are  
bijective. 

Set  $\Omega=\bigcup_{i=1}^{4}D_i$ and consider the restricted map $\phi: \Omega \to \P$.   Take $$\mathcal{K}_{\phi, \Omega}=\bigcap_{n=0}^\infty\phi^{-n}(\Omega)=\Omega.$$ 
By Theorem \ref{general}, $(\Omega, \phi)$  is topologically conjugate to a subshift of finite type $(\Sigma_{A},\sigma)$
with incidence matrix 
$$A=\begin{pmatrix} 0 & 0& 1 & 0   \\ 0&0&1&0\\  0&0&0&1 \\  1&1&0&0 \end{pmatrix}.$$
%\\
%{\rm (2)}By  Lemmas \ref{3.9} and \ref{3.12}, for each  $x\in \Q_2\setminus \{0\} $, there exists an integer $N$ such that 
%$$\phi^{ n}(x)\in S(0,2^i)\cup S(0,2^{-i}), \quad \forall n\geq N,$$   for some $0\leq i\leq   \lfloor  (v_{2}(a)-1)/2 \rfloor$. By Lemma \ref{6}, we have $\Q_p\setminus \{0\}  \in \mathcal{F}_{\phi}$. Noting that $\phi(0)=\infty$ is a repelling fixed point, we have 
%$\mathcal{J}_{\phi}=\{0,\infty\}.$ 
\end{proof}
\begin{proof}[Proof of Theorem \ref{a<1}]
By Lemma \ref{6}, we immediately  have $\phi(X_a)\subset X_a$ and  $X_a \subset \mathcal{F}_\phi$.  By the argument of the proof of Proposition 3 in \cite{FL16},  we obtain  a minimal decomposition for  the subsystem $(X_a,\phi)$  as stated  in Theorem \ref{minid}.
It remains to show that for each  $x\in \mathcal{F}_\phi$, there exists some positive integer $N$ such that  $\phi^{N}(x)\in X_a$. We distinguish the following three cases.

\medskip
{\rm (i)} Let $\Omega=\PP\setminus ( S(0, \sqrt{|a|_2}) \cup X_a) $.  By Lemmas \ref{largesphere} and \ref{gotozero}, we have 
\begin{align}\label{phiomega}
\phi(\Omega)\subset \PP\setminus S(0, \sqrt{|a|_2}). 
\end{align}  
By Proposition \ref{Prop3.9}, $$\mathcal{K}_{\phi,\Omega}\subset \mathcal{J}_\phi.$$ 
By (\ref{phiomega}) and the definition of $\mathcal{K}_{\phi,\Omega}$, each  point $x\in \Omega\setminus  \mathcal{K}_{\phi,\Omega}$ goes to $X_a$ by iteration of $\phi$. Thus, $$\mathcal{J}_{\phi}\cap (\PP\setminus( S(0,\sqrt{|a|_2})\cap X_a))=\mathcal{K}_{\phi,\Omega}.$$
By Lemma \ref{largesphere}, $\phi(S(0,\sqrt{|a|_2}))=S(0,\frac{1}{\sqrt{|a|_2}})\subset \Omega $.  Hence,   $$\mathcal{J}_\phi=\left\{x\in \PP: \phi^{n}(x) \notin  X_a
 \hbox{  for all } n\geq 0 \right\}.$$ The dynamical behaviors of $(\mathcal{K}_{\phi,\Omega},\phi)$ have already been shown in Proposition \ref{Prop3.9}. 
 
 \medskip
{\rm (ii)} By Proposition \ref{-3},  $$S(0,2^{3})\cup S(0,2^{-3} ) \cup S(0,2)\subset \mathcal{J}_\phi.$$ By Lemma \ref{largesphere}, for each integer $k\geq 1$,  we have
\begin{equation*}\label{equ1}
\begin{cases}
\phi^{k-1}(x) \in  S(0,2^{3}), \hbox{ if } x\in  S(0,2^{2k+1}) ;\\
\phi^{k}(x) \in  S(0,2^{3}),\hbox{ if } x\in  S(0,2^{-(2k+1)}).
\end{cases}
\end{equation*} 
Thus, $$\bigcup_{k\geq 0} (S(0,2^{2k+1})\cup S(0,2^{-2k-1} )) \subset \mathcal{J}_{\phi}.$$  
However, if $$x\in \Q_p\setminus \left( \Big(\bigcup_{k\geq 0}\big(S(0,2^{2k+1})\cup S(0,2^{-2k-1} )\big)\Big)\cup \{0\} \right), $$ by Lemma \ref{largesphere}, there exists some $N$ such that  $\phi^{N}(x)\in X_a= S(0,1)$.  Noting that $\phi(0)=\infty$ is a repelling fixed point, we have 
$$\mathcal{J}_{\phi}=\{0,\infty \} \bigcup_{k\geq 0} (S(0,2^{2k+1})\cup S(0,2^{-2k-1} )).$$
The dynamical behaviors of $(S(0,2^{3})\cup S(0,2^{-3} ) \cup S(0,2),\phi)$ have already been shown in Proposition \ref{-3}. 

\medskip
{\rm (iii)} By  Lemmas \ref{3.9} and \ref{3.12}, for each  $x\in \Q_2\setminus \{0\} $, there exists an integer $N$ such that 
$$\forall n\geq N, \quad \phi^{ n}(x)\in S(0,2^i)\cup S(0,2^{-i}),$$   for some $0\leq i\leq   \lfloor  (v_{2}(a)-1)/2 \rfloor$. By Lemma \ref{6}, we have $\Q_p\setminus \{0\}  \in \mathcal{F}_{\phi}$. Noting that $\phi(0)=\infty$ is a repelling fixed point, we have 
$\mathcal{J}_{\phi}=\{0,\infty\}.$ 
\end{proof}
\medskip

\bibliographystyle{plain}
%\bibliography{ref}

\end{document}